\numberwithin{equation}{section}
\theoremstyle{definition}
\newtheorem{thm}{Theorem}[section]
\newtheorem{cor}[thm]{Corollary}
\newtheorem{lem}[thm]{Lemma}
\newtheorem{exa}[thm]{Example}
\newtheorem{prop}[thm]{Proposition}
\newtheorem{defi}[thm]{Definition}
\newtheorem{rem}[thm]{Remark}
\DeclareMathOperator{\Hc}{\mathcal{H}om}
\DeclareMathOperator{\Ext}{\mathrm{Ext}}
\DeclareMathOperator{\N}{\mathcal{N}}
\DeclareMathOperator{\I}{\mathcal{I}}
\DeclareMathOperator{\mo}{\mathcal{O}}
\newcommand{\wt}[1]{\widetilde{#1}}
\newcommand{\mr}[1]{\mathrm{#1}}
\newcommand{\mb}[1]{\mathbb{#1}}
\newcommand{\mc}[1]{\mathcal{#1}}
\newcommand{\ov}[1]{\overline{#1}}
\newcommand{\mf}[1]{\mathfrak{#1}}
\newcommand{\co}{\mathcal{O}}
\begin{document}

 \title{Cohen-Macaulay modules and the Bondal-Orlov conjecture}

\author{Ananyo Dan}

\address{CUNEF Universidad, C. de Leonardo Prieto Castro, 2, Moncloa - Aravaca, 28040 Madrid, Spain}

\email{dan.ananyo@cunef.edu}

\author{Yirui Xiong}

\address{School of Sciences, Southwest Petroleum University, 610500 Chengdu, People's Republic of China}

\email{yiruimee@gmail.com}

\subjclass[2020]{Primary 13D09, 14F08, 13C14, 13H10, 14B05, 14E15, Secondary 14J17}

\keywords{Cohen-Macaulay modules and rings, Bondal-Orlov conjecture, Gorenstein singularities, Flops, Small resolutions, Local cohomology}

\date{\today}

\begin{abstract}
Most of the known examples of derived categories of small resolutions arise as the 
derived category of the endormorphism algebra of tilting bundles or complexes.
Given two resolutions connected by a flop, if the strict transform of a tilting bundle is again tilting, 
then the derived categories of the two resolutions are equivalent, thereby proving the Bondal-Orlov conjecture in this setup.
Unfortunately, it is difficult to produce tilting bundles that are compatible with flops.
In this article, we introduce the notion of CM-degree of locally-free sheaves on resolutions and use them to construct tilting generators.
In particular, we show that if there exists a relative very ample line bundle on the resolution with 
CM degree equal to the dimension of the exceptional locus, then the generator bundles constructed by
Van den Bergh \cite{van1} and Toda-Uehara \cite{toda2} are also tilting bundles.
The advantage of our approach is that the CM degree is preserved under strict transform.
As a consequence we prove the Bondal-Orlov conjecture in certain cases of small resolutions.
\end{abstract}

 \maketitle

 \section{Introduction}
 The underlying field will always be $\mb{C}$.
 A conjecture of Bondal and Orlov \cite{bondor1}, claims that any two crepant resolutions 
 of a singular variety  are derived equivalent.
 Recall, a resolution is \emph{crepant} if the canonical divisor of the resolution is the  pullback of 
 the canonical sheaf on the singularity. 
 The conjecture was proved when the singular variety is three dimensional by Bridgeland \cite{brid1} and using different methods Van den Bergh \cite{van1}.
 The key idea in \cite{van1} 
 is to find a locally-free sheaf (or a complex) on the resolution, called the \emph{tilting generator} such that the 
 derived category of the resolution is derived equivalent to the derived category of the endomorphism algebra of the tilting generator.
 In the case of a three dimensional singularity, the tilting generator is a 
 direct sum of a very ample line bundle and some of its tensor powers.
Toda and Uehara \cite{toda2} gave a general strategy to produce tilting generators in small (crepant) resolutions of any dimension,
 under a cohomological assumption. In particular, they showed that given a small (crepant) resolution with exceptional locus $E$
 and a globally generated ample line bundle $\mc{L}$ on the resolution if $H^i(\mc{L}^{-j})=0$ for all $i>1$ and $0<j<\dim(E)$,
 then there is an iterative method to produce a tilting generator. When $\dim(E)=1$, this gives back the 
 tilting generator of Van den Bergh.

 In this article, we study the generator given by Van den Bergh \cite{van1} and give a necessary and sufficient condition for this 
  to be a tilting bundle in any dimension and more generally for isolated Cohen-Macaulay singularities (not necessarily Gorenstein).
  In this setup, the pullback of the dualizing sheaf from the singularity to the resolution 
  is not necessarily an invertible sheaf, much less isomorphic to the canonical bundle of the resolution.
  We use the definitions in Toda-Uehara \cite{toda2}: by \emph{tilting bundle} on a resolution $\wt{X}$, 
 we mean a locally-free $\mo_{\wt{X}}$-module $\mc{E}$ such that $\mr{Ext}^i_{\wt{X}}(\mc{E},\mc{E})=0$ for all $i>0$ (this is called the
 \emph{partial tilting bundle} in \cite{van-ar, iyama3}).
 In particular, $\mc{E}$ need not be a generator. 
Our objectives in this article are:

 \begin{enumerate}
     \item Find line bundles $\mc{L}$ over small resolutions such that the cohomological vanishing conditions of Toda and Uehara are satisfied (Proposition \ref{prop:van} and Theorem \ref{thm:van-main}).
     \item Produce tilting bundles over small resolutions of isolated Cohen-Macaulay singularities (not necessarily Gorenstein).
     See Corollary \ref{cor:tb}. 
     If this tilting bundle is compatible with small resolutions related by a flop, then we get new instances of the Bondal-Orlov conjecture (Theorem \ref{thm:orlov}).
      \end{enumerate}

The key observation that helps us fulfil objective $(1)$ is that if the pushforward to $X$ of an invertible $\mo_{\wt{X}}$-module $\mc{L}$ is
maximal Cohen-Macaulay and $\mc{L}^{\vee}$ is very ample, then all the higher cohomologies of $\mc{L}$ vanish. See Proposition \ref{prop:van}
for a more general result. We need the following terminology. 
A locally-free sheaf $\mc{F}$ on $\wt{X}$ is called \emph{almost-full} if $\pi_*\mc{F}$ is a maximal Cohen-Macaulay module.
The terminology arises from the literature on McKay correspondence (see \cite[Definition $1.1$]{kahn} or \cite[Definition $3.3$]{BoRo}).
We say that $\mc{F}$ \emph{has CM-degree} $m$, if there exist $m$ sections $s_1,...,s_m \in H^0(\mc{F})$ such that the image under $\pi$ 
of the common-zero locus of $i$ sections $s_1,...,s_i$ is Cohen-Macaulay of codimension $i$, for all $1 \le i \le m$ (see  Definition \ref{defi:cm-deg}).
It is easy to check that $\mc{F}$ has CM-degree $1$ if and only if $\mc{F}^{\vee}$ is almost full.
In Theorem \ref{thm:mck} we give a cohomological description of  almost full sheaves of rank one. This plays a vital role in 
Theorem \ref{thm:tens} to check when is the tensor product of two almost full sheaves, again almost full.
Using this we prove the following vanishing theorem:

\begin{thm}\label{thm:intro-1}
    Let $(X,x)$ be an isolated Cohen-Macaulay singularity of dimension at least $3$ and $\wt{X}$  
    be a small resolution of $X$. If $\mc{L}$ is an invertible sheaf on $\wt{X}$ with CM-degree $m$, 
    then  $H^i((\mc{L}^{\vee})^{\otimes j})=0$ for all $i>0$ and $1 \le j \le m$.
\end{thm}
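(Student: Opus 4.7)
The plan is to proceed by induction on $j$ and show that each tensor power $(\mc{L}^{\vee})^{\otimes j}$ is almost full for $1 \le j \le m$; the desired cohomological vanishing will then be read off from the rank-one description of almost-full sheaves supplied by Theorem \ref{thm:mck}.

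The base case $j = 1$ is immediate. Since $\mc{L}$ has CM-degree $m \ge 1$, in particular CM-degree $1$, the easy equivalence noted in the introduction yields that $\mc{L}^{\vee}$ is almost full. Applying Theorem \ref{thm:mck} to this rank-one almost-full sheaf then gives $H^i(\mc{L}^{\vee}) = 0$ for every $i > 0$.

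For the inductive step, assuming $(\mc{L}^{\vee})^{\otimes(j-1)}$ is almost full with $2 \le j \le m$, I would invoke Theorem \ref{thm:tens} on the pair $\mc{F} := (\mc{L}^{\vee})^{\otimes(j-1)}$ and $\mc{G} := \mc{L}^{\vee}$. The hypotheses of that tensor-product theorem should be verified using the sections $s_j,\dots,s_m$ of $\mc{L}$ that remain after the first $j-1$ have been consumed: the nested chain $Z_1 \supset \dots \supset Z_m$ with $\pi(Z_i)$ Cohen-Macaulay of codimension $i$ is exactly the CM-degree data needed to witness that the tensor product $(\mc{L}^{\vee})^{\otimes j}$ is again almost full. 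A further application of Theorem \ref{thm:mck} then delivers $H^i((\mc{L}^{\vee})^{\otimes j}) = 0$ for $i > 0$, closing the induction.

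The delicate point will be verifying the compatibility hypothesis of Theorem \ref{thm:tens} at each stage: one must check that the residual sections cut out the right Cohen-Macaulay loci on the nested strata, so that the CM-degree is consumed coherently across all $m$ iterations. If this compatibility is built sharply into the statement of Theorem \ref{thm:tens}, the induction is essentially bookkeeping. If not, the backup plan is a direct vanishing argument: twist the short exact sequence $0 \to \mc{L}^{\vee} \to \mo_{\wt{X}} \to \mo_{V(s_1)} \to 0$ by $(\mc{L}^{\vee})^{\otimes(j-1)}$ and reduce the problem to vanishing on the divisor $V(s_1)$, observing that $\mc{L}|_{V(s_1)}$ inherits CM-degree $m-1$ from the restricted chain $Z_2 \supset \dots \supset Z_m$. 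Here the hypothesis $\dim X \ge 3$ is crucial: it keeps the exceptional locus of $\pi$ of codimension at least $2$, so that restricting to $V(s_1)$ preserves the small-resolution-of-isolated-CM-singularity framework and an induction on $\dim X$ can be brought to bear; the $i=1$ case of the resulting long exact sequence is then closed by surjectivity of $H^0$ coming from the $(j-1)$-step of the same induction.
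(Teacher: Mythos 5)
Your overall strategy matches the paper's: you want to establish that the tensor powers $(\mc{L}^{\vee})^{\otimes j}$ are almost full and then extract the cohomology vanishings. But both the main plan and the backup leave a genuine gap, and there is also a minor misattribution.

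The misattribution first: the conclusion $H^i(\mc{L}^{\vee})=0$ for $i>0$ is Proposition \ref{prop:van}, not Theorem \ref{thm:mck}. Theorem \ref{thm:mck} characterizes almost-fullness via $H^i(\mo_{\wt{D}})$ and the $H^0$-surjectivity; Proposition \ref{prop:van} is the statement that turns almost-fullness into the vanishing you actually want.

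The substantive gap in the main plan is exactly the ``delicate point'' you flag, and it is not resolved by the statement of Theorem \ref{thm:tens}. That theorem explicitly assumes that $\mc{L}_1^{\vee}|_{Z(s_2)}$ is almost full as an $\mo_{Z(s_2)}$-module; an induction on $j$ alone on $\wt{X}$ gives you nothing about almost-fullness of the restriction $(\mc{L}^{\vee})^{\otimes(j-1)}|_{Z(s_1)}$, which is a statement about the smaller resolution $\wt{D}(1)\to D(1)$. To supply that input you must run the whole argument on the nested strata $\wt{D}(1)\supset\cdots\supset\wt{D}(m)$ — in effect induct on dimension as well as on $j$ — which is precisely what the paper does. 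The paper first shows each $\mc{L}_j^{\vee}=\mc{L}^{\vee}|_{\wt{D}(j)}$ is almost full (pushing forward $0\to\mc{L}_j^{\vee}\to\mo_{\wt{D}(j)}\to\mo_{\wt{D}(j+1)}\to0$ and comparing depths), and then applies Theorem \ref{thm:tens} recursively across the strata: from $(\mc{L}_{j-i}^{\vee})^{\otimes i}$ almost full on $\wt{D}(j-i)$ it deduces $(\mc{L}_{j-i-1}^{\vee})^{\otimes(i+1)}$ almost full on $\wt{D}(j-i-1)$. Your main plan, run as a single $j$-induction on $\wt{X}$, cannot produce this hypothesis.

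The backup plan has a different gap. Twisting the exact sequence $0\to\mc{L}^{\vee}\to\mo_{\wt{X}}\to\mo_{\wt{D}(1)}\to0$ by $(\mc{L}^{\vee})^{\otimes(j-1)}$ and using the two inductive hypotheses does kill $H^i((\mc{L}^{\vee})^{\otimes j})$ for $i\ge2$, but the $i=1$ case requires surjectivity of
$H^0((\mc{L}^{\vee})^{\otimes(j-1)})\to H^0\bigl((\mc{L}^{\vee}|_{\wt{D}(1)})^{\otimes(j-1)}\bigr)$, and this does not follow from ``the $(j-1)$-step of the same induction'': the induction only gives vanishing of higher cohomology, not surjectivity of a restriction map of global sections. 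That surjectivity is essentially the almost-fullness statement (via Theorem \ref{thm:mck}), so a purely cohomological induction that drops almost-fullness cannot close the loop; you are forced back to tracking the Cohen-Macaulay property of $\pi_*$ of the intermediate sheaves, which is what the paper's depth-comparison plus Theorem \ref{thm:tens} machinery does.
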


See Theorem \ref{thm:van-main} for a proof. This implies that if $\mc{L}$ is of CM-degree $m$, then the vector bundle of the form 
$\mc{E}_l:=\mo_{\wt{X}} \oplus \bigoplus\limits_{i=1}^l \mc{L}^{\otimes i}$ is a tilting bundle for all $l \le m$ (Corollary \ref{cor:tb}). 
Conversely, if a vector bundle of the form $\mc{E}_l$ is a tilting bundle, then $\mc{L}$ is of CM-degree $l$ (Corollary \ref{lem:tilt-2}).
Alternatively, if $\mc{L}$ is a very ample line bundle of CM-degree equal to $\dim(E)$, then one can use the construction of 
Toda-Uehara to get a different tilting bundle. 

One of the challenges to proving the Bondal-Orlov conjecture is that tilting bundles are not preserved under flops.
In contrast, the strict transform of a CM-degree $m$ line bundle, is again a line bundle of CM-degree $m$.
As a result, any tilting bundle that can be constructed using line bundles of CM-degree equal to $\dim(E)$ 
(for example, the one of Toda-Uehara or Van den Bergh) can give us a derived equivalence between resolutions related by a flop.
 We prove:

\begin{thm}
    Let $(X,x)$ be an isolated Gorenstein singularity of dimension at least $3$ and 
    \[\pi^+: \wt{X}^+ \to X\, \mbox{ and } \pi^-: \wt{X}^- \to X\]
    be two small resolutions of $X$, which is a $D$-flop in the sense that $D$ is a very ample divisor on $\wt{X}^+$
    and its strict transform in $\wt{X}^-$ is anti-ample (i.e., dual of an ample divisor). Suppose that the dimension of the 
    exceptional locus of $\pi^+$ and $\pi^-$ are the same, say equal to $n$. If  $\mc{L}^+:=\mo_{\wt{X}^+}(D)$
    is of CM-degree $n$, then 
    \begin{enumerate}
        \item  
        $\mc{E}^+:=\bigoplus_{j=0}^n (\mc{L}^+)^{\otimes j}$ is a tilting generator on $\wt{X}^+$.
        \item The dual of the strict transform in $\wt{X}^-$ of $\mc{E}^+$ is again a tilting generator.
        \item The tilting generators induce a derived equivalence:
        $D^b(\wt{X}^+) \cong D^b(\wt{X}^-)$.
    \end{enumerate}
\end{thm}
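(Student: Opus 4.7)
The plan is to extract (1) and (2) from the CM-degree machinery developed earlier in the paper, and then to identify the two endomorphism algebras over the common base $X$ in order to deduce (3). For (1), since $\mc{L}^+$ has CM-degree $n$, Corollary \ref{cor:tb} with $l=n$ immediately yields $\Ext^i(\mc{E}^+,\mc{E}^+)=0$ for all $i>0$, so $\mc{E}^+$ is tilting in the sense of the introduction. The generator property comes from the very ampleness of $\mc{L}^+$ together with the fact that $\pi^+$ is a small resolution with fibres of dimension at most $n$, via the relative Beilinson-type argument underlying the Van den Bergh \cite{van1} / Toda-Uehara \cite{toda2} construction cited in the introduction.

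For (2), the decisive ingredient is the preservation of CM-degree under strict transform: the $n$ sections of $\mc{L}^+$ witnessing CM-degree $n$ transfer to sections of $\mc{L}^-$ across the codimension-$\ge 2$ locus where $\wt{X}^+$ and $\wt{X}^-$ disagree, and their common zero-loci push forward to the same Cohen-Macaulay subschemes of $X$. Hence $\mc{L}^-$ also has CM-degree $n$, and Corollary \ref{cor:tb} makes $\mc{E}^-:=\bigoplus_{j=0}^n(\mc{L}^-)^{\otimes j}$ tilting. The identity $\Ext^i(\mc{F}^\vee,\mc{G}^\vee)\cong\Ext^i(\mc{G},\mc{F})$ for locally-free $\mc{F},\mc{G}$ transfers the Ext-vanishing to $(\mc{E}^-)^\vee=\bigoplus_{j=0}^n((\mc{L}^-)^\vee)^{\otimes j}$, which is precisely the dual of the strict transform of $\mc{E}^+$. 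Since $(\mc{L}^-)^\vee$ is ample by the flop hypothesis on $D$, the generator property follows as in (1).

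For (3), I would descend both tilting equivalences to $X$ and compare the resulting endomorphism algebras via duality. Set $M:=\pi^+_*\mc{E}^+$ and $N:=\pi^-_*(\mc{E}^-)^\vee$; both are MCM $\mo_X$-modules by the results relating CM-degree to almost-fullness, and the standard identification $\End(\mc{F})\cong\End_{\mo_X}(\pi_*\mc{F})$ for locally-free $\mc{F}$ with MCM push-forward gives $\End(\mc{E}^+)\cong\End_{\mo_X}(M)$ and $\End((\mc{E}^-)^\vee)\cong\End_{\mo_X}(N)$. Because $\mc{E}^+$ and $(\mc{E}^-)^\vee$ restrict to mutually dual vector bundles on the common smooth locus $X\setminus\{x\}$ (of codimension $\ge 3$), and MCM modules over the Gorenstein ring $\mo_X$ satisfy $S_2$, one obtains $N\cong\Hom_{\mo_X}(M,\mo_X)=:M^*$. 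Chaining the two tilting equivalences and invoking Grothendieck duality along $\pi^\pm$ --- using that the small crepant resolutions satisfy $\pi^{\pm !}\mo_X\cong\mo_{\wt{X}^\pm}$ in the Gorenstein case --- produces the sought derived equivalence $D^b(\wt{X}^+)\cong D^b(\wt{X}^-)$. The main obstacle is precisely the bridge $D^b(\End_{\mo_X}(M))\cong D^b(\End_{\mo_X}(M^*))$: the two algebras are opposite rather than isomorphic, and the required equivalence of their module derived categories rests on the symmetric / Calabi-Yau structure of the noncommutative crepant resolution in the Gorenstein setting, together with the self-duality of $M\oplus M^*$.
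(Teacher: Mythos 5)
Your outline of (1) and the Ext-vanishing part of (2) is consistent with the paper's proof: the paper invokes Theorem \ref{thm:van-main} (which you access via Corollary \ref{cor:tb}) and uses Lemma \ref{lem:berg} (Van den Bergh) for the generator property. However, your reduction in (2) -- that CM-degree is literally transferred to $\mc{L}^-$ -- is not what the paper does, and it has a real gap: the sections of $\mc{L}^+$ do extend to sections of $\mc{L}^-$ across the codimension-$\ge 2$ indeterminacy locus, but the resulting zero loci in $\wt{X}^-$ are \emph{strict transforms}, and there is no a priori reason they remain non-singular or intersect $E^-$ properly, which the definition of CM-degree requires. The paper sidesteps this entirely: almost-fullness of $(\mc{L}^+)^{\otimes -j}$ is a statement about the pushforward being MCM, and since $\pi^+_*((\mc{L}^+)^{\otimes -j}) = \pi^-_*((\mc{L}^-)^{\otimes -j})$ (the resolutions agree in codimension $\ge 2$), almost-fullness on $\wt{X}^-$ follows for free; one then applies Proposition \ref{prop:van} to the genuinely ample $(\mc{L}^-)^\vee$ and \cite[Lemma 2.1]{toda2} to get the remaining vanishing. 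You should replace the CM-degree transfer claim with this almost-fullness argument.

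For (3), your approach is genuinely different from the paper's and, as you yourself flag, it is incomplete: pushing down to $M := \pi^+_*\mc{E}^+$ and $N := \pi^-_*(\mc{E}^-)^\vee$ and identifying $N \cong M^*$ leaves you facing $\End(M) \cong \End(M^*)^{\mr{op}}$, and the invocation of ``Calabi--Yau structure'' does not close that gap. The paper's argument is simpler and avoids the opposite-algebra issue altogether: the functor $\mc{F} \mapsto i_*(\mc{F}|_U)$ gives an equivalence $\mathrm{Ref}(Y^+) \cong \mathrm{Ref}(Y^-)$ carrying $\mc{E}^+$ to $\mc{E}^-$, hence $\End_{Y^+}(\mc{E}^+) \cong \End_{Y^-}(\mc{E}^-)$ as rings. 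Then -- and this is the point you missed -- $\End_{Y^-}(\mc{E}^-) \cong \End_{Y^-}((\mc{E}^-)^\vee)$ not via the duality anti-isomorphism $\phi \mapsto \phi^\vee$, but because $(\mc{E}^-)^\vee = \bigoplus_{j=0}^n (\mc{L}^-)^{\otimes -j} \cong \mc{E}^- \otimes (\mc{L}^-)^{\otimes -n}$ is a line-bundle twist of $\mc{E}^-$, and twisting by a line bundle leaves the endomorphism ring unchanged. Chaining the two tilting equivalences then gives $D^b(\wt{X}^+) \cong D^b(\End_{Y^+}(\mc{E}^+)) \cong D^b(\End_{Y^-}((\mc{E}^-)^\vee)) \cong D^b(\wt{X}^-)$ with no descent to $X$ or Grothendieck duality needed.
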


See Theorem \ref{thm:orlov} for a proof and \S \ref{sec:mukai} for an application to the Mukai flop.
As a byproduct we study the relation between tilting bundles and certain ``factorizations" of maximal Cohen-Macaulay modules.
 In particular, given a maximal Cohen-Macaulay module $M$ over an isolated Cohen-Macaulay scheme $(X,x)$, 
 we say that $M$ \emph{admits a tilting factorization} if there exists 
 a reflexive $\mo_X$-module $M_0$ such that $M \cong \mr{Hom}_X(M_0,M_0)$. We call $M_0$ a \emph{tilting factor}.
 See \S \ref{sec:exa-tilt} for all the tilting factors of the structure sheaf.
 We show that every tilting bundle gives rise to maximal Cohen-Macaulay modules admitting a tilting factorization.
 The converse holds true if the exceptional locus is of low dimension.  
 More precisely, we show:

 \begin{thm}\label{thm:intro}
     Let $(X,x)$ be an isolated CM-singularity of dimension $d$ and $\pi: \wt{X} \to X$ be a small resolution with local complete intersection 
     exceptional locus $E$. 
     If $X$ is Gorenstein and $\mc{E}$ is a tilting bundle on $\wt{X}$, then  
    $\pi_*(\mc{E} \otimes \mc{E}^{\vee})$ is a maximal Cohen-Macaulay $\mo_X$-module admitting 
    a tilting factorization with tilting factor $\pi_*(\mc{E})$.
    Conversely, if $d$ is odd, $\dim(E) \le d/2$, $M$ is a maximal Cohen-Macaulay $\mo_X$-module with tilting factor $M_0$, 
    then $\mc{M}_0:=(\pi^*M_0)^{\vee \vee}$ is a tilting bundle provided $\mc{M}_0$ is locally-free.
 \end{thm}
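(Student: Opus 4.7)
The plan is to tackle the two directions separately.

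\textbf{Forward direction.} Since $\mc{E}$ is locally free, $\mc{E}\otimes \mc{E}^{\vee} \cong \mc{E}nd(\mc{E})$ and this sheaf is self-dual. Working at the local ring of $x$ (where $X$ is affine), the Leray spectral sequence translates the tilting hypothesis $\Ext^i_{\wt{X}}(\mc{E},\mc{E})=0$ for $i>0$ into $R^i\pi_*\mc{E}nd(\mc{E})=0$ for $i>0$, so $R\pi_*\mc{E}nd(\mc{E})$ is concentrated in degree zero; call this sheaf $M$. Smallness combined with the Gorenstein hypothesis on $X$ forces $\pi^!\mo_X = \mo_{\wt{X}}$, so Grothendieck duality and self-duality of $\mc{E}nd(\mc{E})$ give
\[
R\mc{H}om_X(M,\mo_X) \;\cong\; R\pi_* R\mc{H}om_{\wt{X}}(\mc{E}nd(\mc{E}),\mo_{\wt{X}}) \;\cong\; R\pi_*\mc{E}nd(\mc{E}) = M,
\]
whence $\Ext^i_X(M,\mo_X)=0$ for $i>0$, i.e., $M$ is MCM. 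For the factorization, $\pi_*\mc{E}$ is reflexive (small pushforward of a locally free sheaf), and Hartogs extension across the codimension-$\ge 2$ exceptional locus yields $\Hom_X(\pi_*\mc{E},\pi_*\mc{E}) \cong \Hom_{\wt{X}}(\mc{E},\mc{E}) = \pi_*\mc{E}nd(\mc{E}) = M$; hence $M_0:=\pi_*\mc{E}$ is a tilting factor.

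\textbf{Converse direction.} Set $K := R\pi_*\mc{E}nd(\mc{M}_0)$; then $\Ext^i_{\wt{X}}(\mc{M}_0,\mc{M}_0)$ equals the $i$-th cohomology sheaf $\mc{H}^i(K)$ in the local setting, and our goal is vanishing for $i>0$. First, $\mc{H}^0(K)=\pi_*\mc{E}nd(\mc{M}_0)$ is reflexive and agrees on $X\setminus\{x\}$ with the reflexive module $M=\Hom_X(M_0,M_0)$, so $\mc{H}^0(K) = M$. For $i\ge 1$, $\mc{H}^i(K)$ is supported at $x$ (finite length) and vanishes for $i>\dim(E)$. Grothendieck duality combined with self-duality of $\mc{E}nd(\mc{M}_0)$ gives $K \cong K^{\vee}:=R\mc{H}om_X(K,\mo_X)$. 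The hypercohomology spectral sequence
\[
E_2^{p,q} = \Ext^p_X(\mc{H}^{-q}(K),\mo_X)\ \Rightarrow\ \mc{H}^{p+q}(K^{\vee})
\]
has $E_2$-page supported only at $(0,0)$, contributing $M^*$ (since $M$ is MCM, higher $\Ext^p(M,\mo_X)$ vanish), and at $(d,-j)$ for $j\in[1,\dim(E)]$, contributing $\mc{H}^j(K)^{\vee}$ (for finite-length $N$ on Gorenstein local $X$, $\Ext^p_X(N,\mo_X)=0$ except at $p=d$). These positions are too far apart in the $p$-direction for any differential $d_r$ to be nonzero, so the sequence degenerates. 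Hence $\mc{H}^n(K^{\vee})\ne 0$ only for $n=0$ or $n\in[d-\dim(E),\,d-1]$. The isomorphism $K\cong K^{\vee}$ forces $\mc{H}^n(K)=0$ for every $n\in[1,\dim(E)]$ outside $[d-\dim(E),\,d-1]$; the two intervals are disjoint precisely when $2\dim(E)<d$, which is guaranteed by $d$ odd and $\dim(E)\le d/2$ (these yield $\dim(E)\le(d-1)/2$, hence $2\dim(E)\le d-1<d$).

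\textbf{Main obstacle.} The crux is the spectral-sequence bookkeeping in the converse: the parity hypothesis on $d$ is what opens a genuine gap between the ``band'' $[1,\dim(E)]$ in which $\mc{H}^{\bullet}(K)$ can live and the ``band'' $[d-\dim(E),\,d-1]$ in which $\mc{H}^{\bullet}(K^{\vee})$ can live. Without requiring $d$ odd, these bands could meet at a middle degree and the self-duality $K\cong K^{\vee}$ would only yield a weaker identification of middle cohomology with its dual, not outright vanishing. A secondary delicate point is verifying that $\pi_*\mc{E}nd(\mc{M}_0) = M$ globally and not merely on the smooth locus, which relies on both sides being reflexive $\mo_X$-modules.
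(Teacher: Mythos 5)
Your proposal is correct, and it takes a genuinely different route from the paper's. In both directions you replace the paper's local-cohomology computations with Grothendieck duality for the proper map $\pi$ together with $\pi^!\mo_X\cong\mo_{\wt{X}}$ (smallness plus Gorenstein). For the forward direction the paper invokes the duality $H^i_E(\mc{F})\cong H^{d-i}(\mc{F}^\vee\otimes K_{\wt{X}})^\vee$ and chases local cohomology long exact sequences, whereas you get $R\mc{H}\mathrm{om}_X(M,\mo_X)\cong M$ in one stroke and read off $\mr{Ext}^{>0}_X(M,\mo_X)=0$, which is the Gorenstein characterization of MCM; both are valid, and yours is arguably more transparent. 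For the converse, the paper runs the Grothendieck spectral sequence for $\pi_*\circ j_*$, uses the local complete intersection hypothesis on $E$ to locate the single nonvanishing $\mc{H}^{\bullet}_E(\mc{M})$ in degree $d-\dim E$, and performs a delicate analysis of the $d_{d-n}$ differential; you instead apply the hyperext spectral sequence $\mr{Ext}^p(H^{-q}(K),\mo_X)\Rightarrow \mc{H}^{p+q}(K^\vee)$ to the self-dual complex $K=R\pi_*\mc{E}\mathrm{nd}(\mc{M}_0)$, observe that the $E_2$-page is concentrated at $(0,0)$ and on the column $p=d$ (the finite-length rows), and conclude by disjointness of the degree bands $[1,\dim E]$ and $[d-\dim E,d-1]$ when $2\dim E<d$. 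Both proofs isolate the same numerical obstruction, and yours also extends to the $d$ even, $\dim E=d/2$ case with the extra vanishing hypothesis of the paper's Theorem~\ref{thm:tf}(3), since then only the middle degree survives. A notable feature of your argument is that it does not use the local complete intersection hypothesis on $E$ at all, which the paper needs to compute $\mc{H}^\bullet_E(\mc{M})$ — a genuine simplification and a mild strengthening. The only points worth spelling out in a final write-up are (i) that $\mc{H}^0(K)=\pi_*\mc{E}\mathrm{nd}(\mc{M}_0)$ agrees with $M$ because both are reflexive and coincide on $U$, and (ii) that $\dim E\le d-2$ (smallness) rules out the only candidate differential $d_d\colon E_d^{0,0}\to E_d^{d,1-d}$, since $1-d$ would require a cohomology sheaf in degree $d-1>\dim E$.
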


 See Theorem \ref{thm:tf} for a proof and a more general statement.  
 We note that, tilting factors have also been studied by Iyama-Wemyss \cite{iyama2, iyama3}, where they are called \emph{modifying modules}.
 We use the terminology tilting factors   
 to highlight similarities with tilting bundles on the resolutions. 
 Iyama and Wemyss show that if $X$ is a $3$ dimensional isolated Gorenstein singularity, then there is a $1-1$ correspondence 
 between modifying modules on $X$ and certain types of reflexive $\mo_X$-modules that are non-commutative analogues of tilting bundles.
 Theorem \ref{thm:intro} retrieves this result and generalizes it to the higher dimension case as well as when the singularity is 
 Cohen-Macaulay (not necessarily Gorenstein).  
 The advantage of the correspondence given in Theorem \ref{thm:intro} 
 is that, in comparison to tilting bundles, maximal Cohen-Macaulay $\mo_X$-modules are much easier to produce (see Examples \ref{exa:af}, \ref{example:ab}).
 They arise naturally as higher syzygies of torsion-free sheaves on a curve (see \cite[\S $1.3$]{brun1}).
 Moreover, if $M$ is a direct sum of rank one sheaves, then the reflexive hull $\mc{E}$ of the pull-back of $M$ is always locally-free.
 
 Finally, we note that Kaledin \cite{kale} proved that tilting generators exist in the case of symplectic singularities of any dimension. However, it is difficult 
 to extend his methods to non-symplectic singularities. 
 In higher dimension, tilting generators play a crucial role in obtaining non-commutative crepant resolutions as demonstrated in the works of 
 Iyama-Wemyss \cite{iyama1, iyama2, iyama3}, Wemyss-Donovon \cite{dono},
 \u{S}penko-Van den Bergh \cite{vspenko, vspenko-1, vspenko-2} and many others \cite{buch-3, van1}. 
 Although there are many similarities between non-commutative crepant resolutions and crepant resolutions, they often behave very differently.
 For example, even if there exists a crepant resolution, a non-commutative crepant resolution need not exist \cite[Example $3.5$]{dao}.
 Conversely, existence of a non-commutative crepant resolution does not imply existence of a commutative resolution (example, terminal cyclic 
 quotient singularity).




{\bf{Acknowledgements}:} We would like to thank Prof. Tom Bridgeland for his many helpful conversations and 
feedback on an earlier draft.

\section{Preliminaries}

\subsection{Local cohomology}
Let $X$ be a topological space and $\mc{F}$ be a sheaf of abelian groups on $X$. 
Given a closed subset $T \subset X$, denote by $\Gamma_T(X,\mc{F})$ the group of sections of $\mc{F}$
with support in $T$. Note that, $\Gamma_T(X,-)$ is a left exact functor. Denote by $H^i_T(X,-)$ the right derived functor 
of $\Gamma_T(X,-)$. Denote by $\mc{H}^i_T(X,\mc{F})$ the sheaf associated to the presheaf which assigns to an open subset $U \subset X$,
the group $H^i_{T \cap U}(T \cap U,\mc{F}|_U)$. We recall the following long exact sequences:

\begin{prop}\label{prop:loc}
 Let $T \subset X$ be a closed subvariety and $U:= X \backslash T$. Then, we have an exact sequence of the form:
 \[ .... H^{i}_T(\mc{F}) \to H^i(\mc{F}) \to H^i(U, \mc{F}|_U) \to H^{i+1}_T(\mc{F}) \to ... \]
 Furthermore, we have $R^i j_*\mc{F} \cong \mc{H}^{i+1}_T(\mc{F})$ for $i>0$ and 
 \[0 \to \mc{H}^0_T(\mc{F}) \to \mc{F} \to j_*(\mc{F}|_U) \to \mc{H}^1_T(\mc{F}) \to 0,\]
where $j: U \to X$ is the natural inclusion.
 \end{prop}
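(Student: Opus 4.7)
The plan is to deduce both claims from the derived functor formalism applied to an injective resolution $\mc{F} \to \mc{I}^\bullet$, using two standard facts: injective sheaves of abelian groups are flasque, and flasqueness is preserved under restriction to open subsets.

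For the long exact sequence in the first assertion, I would first establish that for any flasque sheaf $\mc{G}$ on $X$ the sequence
\[0 \to \Gamma_T(X, \mc{G}) \to \Gamma(X, \mc{G}) \to \Gamma(U, \mc{G}|_U) \to 0\]
is exact. Left exactness is immediate from the definition of $\Gamma_T$, while surjectivity of restriction is the content of the flasque hypothesis. Applying this termwise to $\mc{I}^\bullet$ yields a short exact sequence of complexes. Because $\mc{I}^\bullet|_U$ is again an injective (in particular flasque, hence $\Gamma(U,-)$-acyclic) resolution of $\mc{F}|_U$, its sections over $U$ compute $H^\ast(U, \mc{F}|_U)$, and the long exact sequence in cohomology associated to the short exact sequence of complexes is exactly the one claimed.

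For the sheaf-theoretic assertions I would repeat the argument one level down. The first step is to show that for injective $\mc{I}$ the sequence of sheaves
\[0 \to \mc{H}^0_T(\mc{I}) \to \mc{I} \to j_*(\mc{I}|_U) \to 0\]
is exact, which follows by running the previous argument on each open $V \subset X$ (with closed subset $T \cap V$) and sheafifying. Feeding the resolution $\mc{I}^\bullet$ into this produces a short exact sequence of complexes
\[0 \to \mc{H}^0_T(\mc{I}^\bullet) \to \mc{I}^\bullet \to j_*(\mc{I}^\bullet|_U) \to 0.\]
Since $\mc{I}^\bullet|_U$ is an injective resolution of $\mc{F}|_U$, the complex $j_*(\mc{I}^\bullet|_U)$ computes $Rj_*(\mc{F}|_U)$. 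The long exact sequence in cohomology, combined with the identifications $H^i(\mc{I}^\bullet) = \mc{F}$ for $i=0$ and $0$ for $i>0$, gives both the displayed four-term exact sequence and the isomorphisms $R^i j_*(\mc{F}|_U) \cong \mc{H}^{i+1}_T(\mc{F})$ for $i > 0$.

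The only subtle point is verifying the two surjectivities stated above; once these are in hand, the remainder is routine bookkeeping with the snake lemma. Both reduce to the flasqueness of injectives and the stability of flasqueness under restriction to open subsets, so I do not anticipate any genuine obstacle, the result being a standard piece of derived functor formalism recorded here for later reference.
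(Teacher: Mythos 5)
Your argument is correct and is precisely the standard derived-functor proof: pick an injective (hence flasque) resolution, use flasqueness to get the termwise short exact sequence of global sections and its sheafified version, and read off the long exact sequence and the identification of $R^i j_*$ with $\mc{H}^{i+1}_T$. The paper simply cites \cite[Corollary 1.9]{grh} for this; what you have written out is essentially the proof found there, so there is no real difference in approach.
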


\begin{proof}
 See \cite[Corollary $1.9$]{grh}.
\end{proof}

\subsection{Duality theorem}
Let $X$ be the spectrum of a local noetherian $\mb{C}$-algebra with at worst isolated singularity and $\dim(X)=n$. Let 
\[\pi: \wt{X} \to X\]
be a resolution of singularities. Denote by $E$ the exceptional locus (not necessarily a divisor). 
Recall, the following useful duality theorem on local cohomology groups:

\begin{thm}\label{thm:locdual}
 Let $\mc{F}$ be a locally-free sheaf on $\wt{X}$. Then, Serre duality induces an isomorphism:
 \[H^i_E(\wt{X},\mc{F}) \cong H^{n-i}(\wt{X},\mc{F}^{\vee} \otimes K_{\wt{X}})^{\vee},\, \mbox{ for } 0<i< n.\]
\end{thm}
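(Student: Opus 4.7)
The plan is to deduce this statement from Grothendieck--Serre duality for the proper morphism $\pi$, combined with Grothendieck local duality on the base $X$. Write $R = \Gamma(X, \mo_X)$ with maximal ideal $\mf m$, so $X = \Spec R$ is local with closed point $x$, and the exceptional locus $E = \pi^{-1}(x)$ is proper over $\mb C$.

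First, since $\pi$ is proper and $E = \pi^{-1}(x)$, a standard manipulation of derived functors (using $R\Gamma_E = R\Gamma_{\mf m} \circ R\pi_*$) gives a natural identification
$$H^i_E(\wt X, \mc F) \cong H^i_{\mf m}(R\pi_* \mc F),$$
transferring the local cohomology to the base.

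Second, I invoke Grothendieck local duality on $R$, which admits a dualizing complex $\omega_X^\bullet$: writing $(\cdot)^\vee$ for the Matlis dual with respect to the injective hull of the residue field,
$$H^i_{\mf m}(\mc G^\bullet) \cong \bigl(H^{-i} R\Hom_X(\mc G^\bullet, \omega_X^\bullet)\bigr)^\vee$$
for any coherent complex $\mc G^\bullet$ on $X$. I apply this with $\mc G^\bullet = R\pi_*\mc F$.

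Third, I apply Grothendieck duality for the proper morphism $\pi$:
$$R\Hom_X(R\pi_* \mc F, \omega_X^\bullet) \cong R\pi_* R\Hom_{\wt X}(\mc F, \pi^!\omega_X^\bullet).$$
Since $\wt X$ is smooth of dimension $n$, the upper-shriek functor sends $\omega_X^\bullet$ to the dualizing complex $K_{\wt X}[n]$ of $\wt X$. Because $\mc F$ is locally free, $R\Hom_{\wt X}(\mc F, K_{\wt X}[n]) \cong \mc F^\vee \otimes K_{\wt X}[n]$, and taking $(-i)$-th hypercohomology on both sides yields $H^{n-i}(\wt X, \mc F^\vee \otimes K_{\wt X})$. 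Composing with Step~2 gives the claimed isomorphism.

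The main obstacle is carefully handling the dualizing complex $\omega_X^\bullet$ on the possibly non-Gorenstein base $X$, which need not be a shifted line bundle. However, the formal properties of Grothendieck duality suffice: $\omega_X^\bullet$ enters symmetrically in Steps~2 and 3 and cancels, so no explicit description is required. The range $0 < i < n$ ensures both sides are finite-dimensional $\mb C$-vector spaces (using that $\wt X \setminus E \cong X \setminus \{x\}$ is the regular locus and invoking Proposition~\ref{prop:loc} to sandwich the local cohomology between two finitely generated $R$-modules), so Matlis duality reduces to ordinary $\mb C$-linear duality, matching the Serre duality language of the statement.
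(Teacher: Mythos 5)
Your proof is correct in substance, and since the paper itself only cites \cite{karr} and \cite{ishbook} for this theorem rather than giving an internal argument, your write-up supplies a genuine proof where the paper has none. The route you take --- identify $H^i_E(\wt X,\mc F)$ with $H^i_{\mf m}(R\pi_*\mc F)$, apply Grothendieck local duality on the base, and then Grothendieck--Serre duality for the proper morphism $\pi$ so that $\omega_X^\bullet$ cancels and the right-hand side becomes $H^{n-i}(\wt X,\mc F^\vee\otimes K_{\wt X})$ --- is the canonical derivation of this kind of duality for resolutions, and almost certainly coincides in spirit with the proofs in the references the paper cites. The observation that one never needs an explicit description of $\omega_X^\bullet$ (so that Gorenstein-ness of $X$ is not required) is exactly the right one and matches the level of generality the paper needs in \S 2 and \S 4.

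One small imprecision is the justification of finite-dimensionality at the end. You say Proposition~\ref{prop:loc} sandwiches $H^i_E(\wt X,\mc F)$ between two finitely generated $R$-modules, but the neighbouring terms in that long exact sequence are $H^{i-1}(\wt X\setminus E,\mc F)$ and $H^i(\wt X,\mc F)$, and the former is the cohomology of a sheaf on the punctured scheme $X\setminus\{x\}$, which is typically not finitely generated over $R$. The cleaner argument, which closes the gap without changing your strategy: for $0<i<n$ the sheaf $R^{n-i}\pi_*(\mc F^\vee\otimes K_{\wt X})$ is supported at $\{x\}$ (because $\pi$ is an isomorphism over $X\setminus\{x\}$), so $H^{n-i}(\wt X,\mc F^\vee\otimes K_{\wt X})=\Gamma(X,R^{n-i}\pi_*(\mc F^\vee\otimes K_{\wt X}))$ is a finite-length $R$-module, hence finite-dimensional over $\mb C$; the Matlis-dual form of local duality holds without any finiteness hypothesis and then forces $H^i_E(\wt X,\mc F)$ to be finite length as well, at which point Matlis duality agrees with $\mb C$-linear duality. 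With that repair the proof is complete.
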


\begin{proof}
 See \cite[Theorem $3.2$]{karr} or \cite[Corollary $3.5.15$]{ishbook}.
\end{proof}

\section{Tilting factorization}

\subsection{Setup}\label{se01}
Let $A$ be a local, Cohen-Macaulay $\mathbb{C}$-algebra with maximal ideal $\mf{m}$. Suppose that for every prime ideal $\mf{p}$
of $A$, $A_{\mf{p}}$ is regular if $\mf{p} \not= \mf{m}$. In other words, $X=\mr{Spec}(A)$ has only isolated singularities.
Denote by $x$ the singular point of $X$ and $U:=X \backslash \{x\}$.

    \begin{rem}\label{rem:crep}
    Let $\pi: \wt{X} \to X$ be a small resolution. 
    Since $X$ is Cohen-Macaulay, it is equipped with a dualizing sheaf $\omega_X$.
    Note that $K_{\wt{X}} \cong (\pi^*\omega_X)^{\vee \vee}$, where $K_{\wt{X}}$ denotes 
    the canonical sheaf on $\wt{X}$. 
        It is well-known that in this case $R\pi_*\mo_{\wt{X}} \cong \mo_X$. Indeed, by the duality theorem
        \[R\pi_*\mo_{\wt{X}}=R\pi_* R\Hc_{\wt{X}}(K_{\wt{X}}, K_{\wt{X}})=R\Hc_X(R\pi_*K_{\wt{X}}, \omega_X). \]
        By Grauert-Riemenschneider theorem, $R\pi_*K_{\wt{X}}=\pi_*K_{\wt{X}}=\omega_X$, where the last equality 
        follows from the fact that $\pi$ is a small resolution. 
        Therefore, 
        \[R\Hc_X(R\pi_*K_{\wt{X}}, \omega_X)=R\Hc_X(\omega_X,\omega_X)=\Hc_X(\omega_X,\omega_X)=\mo_X,\]
        where the last two equalities follow from \cite[Theorem $3.3.10$]{brun1}. Note that, here $X$ need not be 
        Gorenstein.
    \end{rem}

\begin{defi}
    Let $M$ be a maximal Cohen-Macaulay $\mo_X$-module. We say that $M$ \emph{admits a tilting factorization} if there 
    exists a reflexive $\mo_X$-module $M_0$ such that $M \cong \mr{Hom}_X(M_0,M_0)$. If $M$ admits a tilting factorization
   as above, then $M_0$ is called a \emph{tilting factor} of $M$.
\end{defi}

\subsection{Example (Tilting factors of $\mo_X$):}\label{sec:exa-tilt}
Any tilting factor of $\mo_X$ is of the form $\I_{Z|X}$ or $\Hc_X(\I_{Z|X}, \mo_X)$ for some integral divisor $Z \subset X$. Indeed,   
let $\mc{F}$ be a tilting factor of $\mo_X$. Since $\mo_X$ is of rank one, $\mr{rk}(\mc{F})=1$. 
    For a general section $s$ of $\mc{F}$, we have a short exact sequence:
    \begin{equation}\label{eq:rk1}
        0 \to \mo_X \xrightarrow{s} \mc{F} \to \mc{A} \to 0,
    \end{equation}
    where $\mc{A}$ is supported on an integral divisor $Z \subset X$ and is torsion-free of rank one as an $\mo_Z$-module.
    Dualizing this, we get an exact sequence:
   \[        0 \to \mr{Hom}_X(\mc{F},\mo_X) \to \mo_X \to \mr{Ext}^1_X(\mc{A},\mo_X) \xrightarrow{\phi} \mr{Ext}^1_X(\mc{F},\mo_X) \to 0\]
    Denote by $\mc{A}':= \ker(\phi)$. Since $\mc{F}$ is locally-free away from the singular point $x$, $\mr{Ext}^1_X(\mc{F},\mo_X)$
    is supported in $x$. Therefore, the support of $\mc{A}'$ is the same as the support of $\mc{A}$, which is $Z$.
    Since there is a surjective morphism from $\mo_X$ to $\mc{A}'$, $\mc{A}' \cong \mo_Z$ i.e., we have a short exact sequence:
       \[ 0 \to \mr{Hom}_X(\mc{F},\mo_X) \to \mo_X \to \mo_Z \to 0.\]
    This implies $\mr{Hom}_X(\mc{F},\mo_X) \cong \I_{Z|X}$. In particular, $\I_{Z|X}$ is a reflexive sheaf. 
    Since $\mc{F}$ is reflexive, we have 
    \[\mc{F} \cong (\mc{F})^{\vee \vee} \cong \mr{Hom}_X(\I_{Z|X},\mo_X).\]
    Since dual of tilting factors are also tilting, $\I_{Z|X}$ is also a tilting factor of $\mo_X$. This proves our claim.

    Conversely, given an integral divisor $Z \subset X$, $\I_{Z|X}$ and $\Hc_X(\I_{Z|X},\mo_X)$ are tilting factors.
    Indeed, since $X$ is regular away from $x$, $\I_{Z|X}$ is an invertible sheaf away from $x$.
    Therefore, $\mr{Hom}(\I_{Z|X}, \I_{Z|X})$ is isomorphic to $\mo_X$, away from $x$. Since $\mo_X$ and $\mr{Hom}(\I_{Z|X},\I_{Z|X})$
    are reflexive sheaves that agree over a complement  of codimension at least $2$, we have 
    \[\mo_X \cong \mr{Hom}(\I_{Z|X},\I_{Z|X}) \cong \mr{Hom}_X(\I_{Z|X}^{\vee}, \I_{Z|X}^{\vee}).\]
    Therefore, $\I_{Z|X}$ and $(\I_{Z|X})^{\vee}$ are tilting factor of $\mo_X$. This proves our claim.

\subsection{Tilting bundle}
Setup as in \S \ref{se01}. Let $d:=\dim(X)$ and \[\pi: \wt{X} \to X\] be a small resolution.
Denote by $E$ the exceptional locus and $n:=\dim(E)$. 
Recall, a \emph{tilting bundle} on $\wt{X}$ is a locally-free $\mo_{\wt{X}}$-module $\mc{E}$
such that $H^i(\mc{E} \otimes \mc{E}^{\vee})=0$ for all $i>0$.
We prove:

\begin{thm}\label{thm:tf}
 The following are true:
 \begin{enumerate}
     \item  Let $\mc{E}$ be a tilting bundle on $\wt{X}$. If $X$ is Gorenstein, then  
    $\pi_*(\mc{E} \otimes \mc{E}^{\vee})$ is a maximal Cohen-Macaulay $\mo_X$-module admitting 
    a tilting factorization with tilting factor $\pi_*(\mc{E})$\footnote{In fact, we proved $\pi_*(\mc{E})$ is a noncommutative crepant resolution of $X$ if $\mc{E}$ is moreover a generator in $D^b(\wt{X})$, we thank      Jieheng Zeng for pointing out this to us.}.
    \item Let $M$ be a maximal Cohen-Macaulay $\mo_X$-module admitting a tilting factorization. Let $M_0$ be a tilting factor of $M$.
    Suppose that $E \subset \wt{X}$ is a local complete intersection and $\dim(E) \le d/2$. 
    If $d$ is odd and $\mc{M}_0:= (\pi^*M_0)^{\vee \vee}$ is locally-free, then $\mc{M}_0$ is a tilting bundle.
    \item Assumptions as in $(2)$. If $d$ is even, $\mc{M}_0$ is locally-free and $H^{d/2}(\mc{M}_0 \otimes \mc{M}_0^{\vee})=0$, then 
    $\mc{M}_0$ is a tilting bundle.
 \end{enumerate}
\end{thm}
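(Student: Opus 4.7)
The plan is to prove all three parts by translating between the tilting vanishing on $\wt{X}$ and local cohomological properties on $X$, using the long exact sequences of Proposition \ref{prop:loc} and the Serre-duality statement of Theorem \ref{thm:locdual}.

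For Part (1), the tilting hypothesis combined with $X$ being affine immediately gives $R^i\pi_*(\mc{E} \otimes \mc{E}^\vee) = 0$ for $i > 0$. To verify that $\pi_*(\mc{E} \otimes \mc{E}^\vee)$ is maximal Cohen-Macaulay, it suffices to show $H^i_{\{x\}}(X, \pi_*(\mc{E} \otimes \mc{E}^\vee)) = 0$ for $i < d$. The local cohomology LES on $X$, together with the smallness of $\pi$, reduces the range $i \ge 2$ to showing $H^{i-1}(\wt{X} \setminus E, \mc{E} \otimes \mc{E}^\vee) = 0$ for $2 \le i \le d-1$. A second application of the local cohomology LES, now on $\wt{X}$, identifies this with $H^i_E(\wt{X}, \mc{E} \otimes \mc{E}^\vee)$ thanks to the tilting vanishing. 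Theorem \ref{thm:locdual} then rewrites $H^i_E$ as the dual of $H^{d-i}(\wt{X}, \mc{E} \otimes \mc{E}^\vee \otimes K_{\wt{X}})$ using the self-duality $(\mc{E} \otimes \mc{E}^\vee)^\vee \cong \mc{E} \otimes \mc{E}^\vee$, and the Gorenstein hypothesis forces $K_{\wt{X}} \cong \mo_{\wt{X}}$ (as in Remark \ref{rem:crep}, since $\omega_X \cong \mo_X$ on the local Gorenstein scheme), reducing the question to the tilting hypothesis once more. The remaining low-depth pieces $H^0_{\{x\}}$ and $H^1_{\{x\}}$ vanish because $\pi_*$ of a locally-free sheaf is reflexive on the normal scheme $X$. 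For the tilting factorization, I would compare $\pi_*(\mc{E} \otimes \mc{E}^\vee) = \pi_*\Hc_{\wt{X}}(\mc{E}, \mc{E})$ with $\mr{Hom}_X(\pi_*\mc{E}, \pi_*\mc{E})$ via the natural map: both are reflexive on $X$, and they agree over $U$ where $\pi$ is an isomorphism, so they coincide globally.

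For Parts (2) and (3), set $\mc{F} := \mc{M}_0 \otimes \mc{M}_0^\vee$; the plan is to prove $H^i(\wt{X}, \mc{F}) = 0$ for all $i > 0$ by combining two complementary vanishing ranges. Since $\mc{M}_0$ is locally-free on $\wt{X}$, the restriction $M_0|_U$ is locally-free, and the tilting factorization $M \cong \mr{Hom}_X(M_0, M_0)$ allows us to identify $\mc{F}|_{\wt{X} \setminus E}$ with $M|_U$ via the isomorphism $\pi: \wt{X} \setminus E \xrightarrow{\sim} U$. The MCM property of $M$ then gives $H^i(\wt{X} \setminus E, \mc{F}) \cong H^{i+1}_{\{x\}}(X, M) = 0$ for $1 \le i \le d - 2$. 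The local complete intersection hypothesis on $E$ combined with local freeness of $\mc{F}$ yields $\mc{H}^i_E(\mc{F}) = 0$ and hence $H^i_E(\wt{X}, \mc{F}) = 0$ for $i < d - n$. The local cohomology LES on $\wt{X}$ then forces $H^i(\wt{X}, \mc{F}) = 0$ for $0 < i < d - n$. In parallel, since the fibers of $\pi$ have dimension at most $n$, $R^i\pi_*\mc{F} = 0$ for $i > n$ and hence $H^i(\wt{X}, \mc{F}) = 0$ for $i > n$.

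The numerical distinction between (2) and (3) records whether the ranges $(0, d - n)$ and $(n, \infty)$ together cover all positive integers. When $d$ is odd and $n \le d/2$, the integrality of $n$ forces $n \le (d-1)/2$, so $d - n \ge (d+1)/2 > n$ and the two ranges overlap, proving (2). When $d$ is even the borderline case $n = d/2$ is allowed and the two ranges miss exactly $i = d/2$; this gap is precisely the one closed by the hypothesis $H^{d/2}(\mc{M}_0 \otimes \mc{M}_0^\vee) = 0$ imposed in (3). I expect the main obstacle to be the bookkeeping across the several long exact sequences in Part (1) and confirming that the Gorenstein assumption enters at exactly the right place to trivialize $K_{\wt{X}}$ and make Serre duality symmetric, together with the reflexivity checks needed to handle the low-depth local cohomologies and to identify $\pi_*(\mc{E} \otimes \mc{E}^\vee)$ with $\mr{Hom}_X(\pi_*\mc{E}, \pi_*\mc{E})$.
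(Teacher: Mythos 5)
Your Part (1) follows the paper's route exactly: reduce to $H^i_x(\pi_*(\mc{E}\otimes\mc{E}^\vee))=0$ via Yoshino, pass to $H^i(U,\cdot)$ by the local cohomology long exact sequence on $X$, pass further to $H^{i+1}_E(\cdot)$ by the long exact sequence on $\wt{X}$, and kill these by Theorem~\ref{thm:locdual} plus the tilting hypothesis and the crepancy $K_{\wt{X}}\cong\mo_{\wt{X}}$. The reflexivity argument for the low local cohomologies and for identifying $\pi_*\Hc(\mc{E},\mc{E})$ with $\Hom_X(\pi_*\mc{E},\pi_*\mc{E})$ is also what the paper does.

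Your Parts (2) and (3) are correct but take a genuinely cleaner path than the paper. The paper runs the Grothendieck spectral sequence $R^k\pi_*(R^l j_*(\mc{M}|_U))\Rightarrow R^{k+l}(\pi\circ j)_*(\mc{M}|_U)$, uses the full purity $\mc{H}^l_E(\mc{M})=0$ for $l\neq d-n$ (this is where the LCI hypothesis genuinely enters), chases differentials on the $E_{d-n}$ page to compute $E_\infty^{k,d-n-1}$, and deduces $H^i_E(\mc{M})=0$ for all $i\le d-1$ before closing with the local cohomology sequence; the numerical hypothesis $n\le d/2$ enters in controlling $R^{k+d-n}\pi_*\mc{M}$. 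You instead establish $H^i(\wt{X},\mc{F})=0$ by sandwiching: the low-degree vanishing $H^i_E(\mc{F})=0$ for $i<d-n$ (from the sheafy local cohomology vanishing below codimension and the local cohomology sequence) gives $H^i(\wt{X},\mc{F})=0$ for $0<i<d-n$, and $R^i\pi_*\mc{F}=0$ for $i>n$ gives vanishing above $n$; the numerical hypothesis then becomes the transparent statement that the ranges $(0,d-n)$ and $(n,\infty)$ cover the positive integers. This avoids the spectral-sequence differential analysis entirely. Note in fact that you only need $\mc{H}^q_E(\mc{F})=0$ for $q<d-n$, which holds on the regular scheme $\wt{X}$ from the codimension of $E$ alone (grade equals height for CM rings), whereas the paper's full purity statement $\mc{H}^q_E=0$ for $q\ne d-n$ is the step that actually invokes the local complete intersection hypothesis; your argument quietly shows that hypothesis is not needed for this part of the theorem.

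One small disconnect: in Part (1) you open by observing $R^i\pi_*(\mc{E}\otimes\mc{E}^\vee)=0$ for $i>0$ from Leray and affineness of $X$, but you never use this in the remainder of the argument; the local cohomology chain you then run is self-contained, so you could simply drop that sentence.
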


\begin{proof}
$(1)$   Denote by $M:=\pi_*(\mc{E} \otimes \mc{E}^{\vee})$.
 By \cite[Proposition $1.2$]{yoshi}, it suffices to check that 
    \[H^i_x(M)=0\, \mbox{ for }\, i \not= \dim(X).\]
    Since $X$ is affine, $H^i(M)=0$ for $i \not= 0$. 
    As pushforward of torsion-free sheaves are torsion-free, $H^0_x(M)=0$. 
    By Proposition \ref{prop:loc}, it suffices to check that $H^i(U,M)=0$ for $i \not\in \{0,\dim(X)-1\}$.
    Since $X$ is Gorenstein and $\pi$ is a small resolution, $K_{\wt{X}} \cong \mo_X$. 
   Since $\mc{E}$ is a tilting bundle, Theorem \ref{thm:locdual}  implies that 
 \begin{equation}\label{eq:van2}
     H^{d-i}_E(\mc{E} \otimes \mc{E}^{\vee}) \cong H^i(\mc{E}^{\vee} \otimes \mc{E})^{\vee} = 0\, \mbox{ for all } 0<i<d.
 \end{equation}
 Using Proposition \ref{prop:loc}, we have exact sequences of the 
 \[H^i(\mc{E} \otimes \mc{E}^{\vee}) \to H^i(U,\mc{E} \otimes \mc{E}^{\vee}) \to H^{i+1}_E(\mc{E} \otimes \mc{E}^{\vee})\]
 Then, \eqref{eq:van2} implies that $H^i(U, \mc{E} \otimes \mc{E}^{\vee})=0$ for $1 \le i <d-1$.
This proves that $M$ is a maximal Cohen-Macaulay $\mo_X$-module. Moreover, since 
$\pi_*\mc{E}$ is a reflexive $\mo_X$-module, 
\[ \pi_*(\mc{E} \otimes \mc{E}^{\vee}) \cong \Hc_X(\pi_*\mc{E},\mc{E}). \]
Therefore, $\pi_*\mc{E}$ is a tilting factor of $M$. This proves $(1)$. 

We now prove $(2)$ and $(3)$.
Let $M$ be a  maximal Cohen-Macaulay $\mo_X$-module admitting a tilting factorization. Let $M_0$ be a tilting factor of $M$.
Denote by $\mc{M}_0:=(\pi^*M_0)^{\vee \vee}$ and $\mc{M}:=\mc{M}_0 \otimes \mc{M}_0^{\vee}$. 
By assumption, $\mc{M}_0$ is a locally-free sheaf. 
We need to show that $H^i(\mc{M})=0$  for all  $i>0$.
Note that, $\pi_*(\mc{M})$ is a reflexive $\mo_X$-module ($\Hc_{\wt{X}}(\mc{M}_0,\mc{M}_0)$ is reflexive and pushforward of reflexive sheaves under small resolutions are reflexive) which agrees with $M$ over $U$. Therefore, $M \cong \pi_*\mc{M}$.
Since $M$ is maximal Cohen-Macaulay and $X$ is affine,
\begin{equation}\label{eq:loc-2}
    H^i_x(M)=0\, \mbox{ for } i \not=d,\, \mbox{ which implies that } H^i(U,\mc{M})=H^{i}(U,M)=0\, \mbox{ for } 1 \le i <d-1.
\end{equation}
Let $j:U \to \wt{X}$ be the open immersion. Using the Grothendieck spectral sequence, we have 
\[ E_2^{k,l}= R^k \pi_*(R^l j_*(\mc{M}|_U)) \Rightarrow R^{k+l} (\pi \circ j)_* (\mc{M}|_U).\]
For $1 \le k+l \le d-2$, we have $R^{k+l} (\pi \circ j)_* (\mc{M}|_U) \cong H^{k+l+1}_x(M)=0$. 
Since $\mc{M}$ is locally-free, $E$ is local complete intersection and $\dim(E)=n$, then for  $l \ge 1$ 
 we have $R^l j_*(\mc{M}|_U)=\mc{H}^{l+1}_E(\mc{M})$, and for $l \not= d-n, \mc{H}^{l}_E(\mc{M})=0$ (see \cite[Proposition 2.6.8]{Dan2015}). 
In particular, for  $l \ge 1$ and $l \not= d-n-1$ 
\begin{equation}\label{eq:loc-1}
    R^l j_*(\mc{M}|_U)=\mc{H}^{l+1}_E(\mc{M})=0.
\end{equation}
Therefore, $E_2^{k,l}=0$ for $l \ge 1$, $l \not= d-n-1$ and all $k \in \mb{Z}$. This means $E_{\infty}^{k,l}=0$ for $l \ge 1$ and $l \not= d-n-1$.
Hence, for $0 \le k \le n-1$ we have 
\begin{equation}\label{eq:spec-3}
   E_\infty^{k,d-n-1}\, =\, R^{k+d-n-1} (\pi \circ j)_* (\mc{M}|_U)\, =\, 0. 
\end{equation}
We now calculate $E_{\infty}^{k,d-n-1}$ for all $0 \le k \le d-n-1$. Using the vanishings of $E_2^{k,l}$ above, we observe that 
\begin{equation}\label{eq:sepc-2}
    E_r^{k,d-n-1}=E_2^{k,d-n-1}\, \mbox{ if }\, r \le d-n\, \mbox{ and } E_\infty^{k,d-n-1}=E_{d-n+1}^{k,d-n-1}.
\end{equation}
For $r=d-n$, we have $E_{r+1}^{k,d-n-1}$ is the cohomology in the middle of the 
sequence:
\begin{equation}\label{eq:sepc-1}
    E_{d-n}^{k-d+n,2d-2n-2} \to E_{d-n}^{k,d-n-1} \to E_{d-n}^{k+d-n,0}.
\end{equation}
Since $d-n \ge 2$, we have $2d-2n-2 \ge 1$. This implies the first term in \eqref{eq:sepc-1} is $0$.
Moreover,  
\[E_2^{k+d-n,0}\, =\, R^{k+d-n}\pi_*(j_*\mc{M}|_U)\, =\, R^{k+d-n}\pi_*\mc{M}\, \mbox{ for } 0 \le k \le n-1, \]
which is zero if $d-n> \dim(E)=n$. This is the case if $n < d/2$. 
Moreover by assumption, if $d$ is even and $n=d/2$, then $H^{d/2}(\mc{M})=0$. 
Therefore, $E_2^{k+d-n,0}=0$ in the setup of $(2)$ and $(3)$. This implies the last term in \eqref{eq:sepc-1} is equal to $0$.
Hence, $E_{d-n+1}^{k,d-n-1}=E_2^{k,d-n-1}$. Combining \eqref{eq:sepc-2} with \eqref{eq:spec-3}, we conclude that 
for $0 \le k \le n-1$,  
\[H^k(\mc{H}_E^{d-n}(\mc{M}))\, =\, H^k(R^{d-n-1} j_* \mc{M}|_U)\, =\, E_2^{k,d-n-1}\, =\, E_{\infty}^{k,d-n-1}= 0.\]
 Using the vanishings \eqref{eq:loc-1} along with the spectral sequence  
\[E_2^{k,l}= H^k(\mc{H}^l_E(\mc{M})) \Rightarrow H^{k+l}_E(\mc{M}),\, \mbox{ we conclude }
H^k(\mc{H}^{d-n}_E(\mc{M}))=H^{k+d-n}_E(\mc{M}).\]
This implies 
$H^i_E(\mc{M})=0$  for $i \le d-1$. Combining with \eqref{eq:loc-2}, the local cohomology long exact sequence 
(see Proposition \ref{prop:loc}) implies that $H^i(\mc{M})=0$ for $1 \le i \le d-2$. Moreover, since the fibers of $\pi$ are of dimension less
than or equal to $d-2$, we have $H^i(\mc{M})=0$ for $i>d-2$. This proves $H^i(\mc{M})=0$  for all  $i>0$ i.e., $\mc{M}_0$ is a tilting bundle.
This proves the theorem.
\end{proof}

\begin{cor}\label{cor:tb-2}
 Suppose that $X$ is Gorenstein and  $\mo_{\wt{X}} \oplus \left(\oplus_{i=1}^m \mc{L}_i\right)$ is a tilting bundle for some invertible sheaves $\mc{L}_i$. Then, 
    \[ \pi_*(\mc{L}_i),\, \pi_*(\mc{L}_i^{\vee}),\, \pi_*(\mc{L}_i^{\vee} \otimes \mc{L}_j)\, \mbox{ for } 1 \le i \le m\, \mbox{ and }\, 1 \le j \le m\]
    are all maximal Cohen-Macaulay $\mo_X$-module.
\end{cor}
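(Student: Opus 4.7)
The plan is to apply Theorem \ref{thm:tf}(1) to the tilting bundle
\[\mc{E} := \mo_{\wt{X}} \oplus \bigoplus_{i=1}^m \mc{L}_i\]
and then to decompose the resulting maximal Cohen--Macaulay module into direct summands. Since $X$ is Gorenstein by hypothesis and $\mc{E}$ is assumed tilting, Theorem \ref{thm:tf}(1) gives immediately that $\pi_*(\mc{E} \otimes \mc{E}^{\vee})$ is a maximal Cohen--Macaulay $\mo_X$-module.

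Next, I would expand the tensor product
\[ \mc{E} \otimes \mc{E}^{\vee} \;\cong\; \mo_{\wt{X}} \,\oplus\, \bigoplus_{i=1}^m \mc{L}_i \,\oplus\, \bigoplus_{j=1}^m \mc{L}_j^{\vee} \,\oplus\, \bigoplus_{i,j=1}^m \mc{L}_i \otimes \mc{L}_j^{\vee},\]
and push forward. Since $\pi_*$ commutes with finite direct sums, I obtain
\[ \pi_*(\mc{E} \otimes \mc{E}^{\vee}) \;\cong\; \mo_X \,\oplus\, \bigoplus_{i=1}^m \pi_*\mc{L}_i \,\oplus\, \bigoplus_{j=1}^m \pi_*\mc{L}_j^{\vee} \,\oplus\, \bigoplus_{i,j=1}^m \pi_*(\mc{L}_i \otimes \mc{L}_j^{\vee}),\]
using that $\pi_*\mo_{\wt{X}} = \mo_X$ as in Remark \ref{rem:crep}.

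Finally, I would invoke the standard fact that a direct summand of a maximal Cohen--Macaulay module over a Cohen--Macaulay local ring is itself maximal Cohen--Macaulay (which follows from the identity $\mathrm{depth}(M \oplus N) = \min(\mathrm{depth}(M), \mathrm{depth}(N))$ together with the depth lemma $\mathrm{depth}(M) \le \dim X$). Applied to the direct sum decomposition above, this immediately yields that $\pi_*\mc{L}_i$, $\pi_*\mc{L}_j^{\vee}$ and $\pi_*(\mc{L}_i \otimes \mc{L}_j^{\vee})$ are each maximal Cohen--Macaulay; since $\mc{L}_i^{\vee} \otimes \mc{L}_j$ appears as a summand (swap the roles of $i$ and $j$), the corollary follows. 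There is no real obstacle here, since the hard work is already absorbed into Theorem \ref{thm:tf}(1); the only point to verify carefully is that $\pi_*$ distributes across the tensor expansion, which is clear as it is a finite direct sum of coherent sheaves.
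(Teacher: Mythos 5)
Your proposal is correct and follows essentially the same route as the paper: apply Theorem \ref{thm:tf}(1) to $\mc{E} := \mo_{\wt{X}} \oplus \bigoplus_{i=1}^m \mc{L}_i$, expand $\pi_*(\mc{E}\otimes\mc{E}^{\vee})$ into its direct summands, and conclude each summand is maximal Cohen--Macaulay. The only cosmetic difference is the justification for the summand step: you use the depth identity for direct sums, whereas the paper observes that $H^j_x(-)$ commutes with finite direct sums and applies the local-cohomology characterization of maximal Cohen--Macaulayness --- two equivalent ways of saying the same thing.
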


\begin{proof}
 Denote by $\mc{E}\, :=\oplus_{i=1}^m \mo_X \oplus \mc{L}_i$. Then 
\[\pi_*(\mc{E} \otimes \mc{E}^{\vee}) \cong \mo_X^{\oplus m+1} \oplus \left(\bigoplus_i^m \pi_*\mc{L}_i\right) \oplus \left(\bigoplus_i^m \pi_*\mc{L}_i^{\vee}\right) \oplus \bigoplus\limits_{i \not= j} \pi_*(\mc{L}_i^{\vee} \otimes \mc{L}_j) \]
By Theorem \ref{thm:tf}, $\pi_*(\mc{E} \otimes \mc{E}^{\vee})$ is a maximal Cohen-Macaulay $\mo_X$-module. 
This implies 
\[H^j_x(\pi_*(\mc{E} \otimes \mc{E}^{\vee}))=0\, \mbox{ for }\, j \not= d.\]
Since $H^j_x(-)$ commutes with direct sum. This implies 
\[H^j_x(\mo_X)=0, \, H^j_x(\pi_*\mc{L}_i)=0, \, H^j_x(\pi_*\mc{L}_i^{\vee})=0\, \mbox{ and } H^j_x(\pi_*(\mc{L}_i^{\vee} \otimes \mc{L}_k))=0\, \mbox{ for all }
j \not= d \mbox{ and }\]
$1 \le i, k \le m$.
This implies $\pi_*(\mc{L}_i)$, $\pi_*(\mc{L}_i^{\vee})$ and  $\pi_*(\mc{L}_i^{\vee} \otimes \mc{L}_k)$
are all maximal Cohen-Macaulay $\mo_X$-module
for $1 \le i,k \le m$. This proves the corollary.     
\end{proof}

\section{Properties of almost full sheaves}
Throughout this section we use the setup of \S \ref{se01} above. 
In particular, $X$ is not necessarily Gorenstein. Let $d=\dim(X)$.  
Consider a small resolution of $X$:
\[\pi:\wt{X} \to X.\]  
Denote by $E$ the exceptional locus (which is not necessarily a divisor) and $U:=X \backslash \{x\}$. 

\begin{defi}
    A locally-free sheaf $\mc{M}$ of $\wt{X}$ is said to \emph{almost full} if
$\pi_*\mc{M}$ is a maximal Cohen-Macaulay $\mo_X$-module.
\end{defi}

\begin{exa}\label{exa:af}
    Let $(X,x)$ be any $3$-dimensional isolated Cohen-Macaulay singularity and $\pi: \wt{X} \to X$ be a resolution (not necessarily small).
    Let $D \subset X$ be a normal, integral surface and $\wt{D} \subset \wt{X}$ be the strict transform of $D$. Then, $\mo_{\wt{X}}(-\wt{D})$
    is almost full. Indeed, apply $\pi_*$ to the short exact sequence:
    \[0 \to \mo_{\wt{X}}(-\wt{D}) \to \mo_{\wt{X}} \to \mo_{\wt{D}} \to 0\]
    and use depth comparison in short exact sequence (normal surface are Cohen-Macaulay).
\end{exa}

\begin{exa}\label{example:ab}
Take $(X,x)$ to be an $n$-dimensional isolated Cohen-Macaulay singularity and let $\pi:\wt{X}\rightarrow X$ be a resolution (not necessarily crepant).
Let $\mc{F}$ be a coherent sheaf on $\wt{X}$ such that 
\begin{enumerate}
    \item $\Ext^i(\mc{F},K_{\wt{X}}) = 0$ for $i>0$, and
    \item $H^i(\wt{X}, \mc{F}) = 0$ for $i>0$.
\end{enumerate}
    Then the pushforward $\pi_*\mc{F}$ is a Cohen-Macaulay module i.e., $\mc{F}$ is an almost full sheaf. 
    The proof follows identically as in \cite[Lemma 3.1]{ha17}, after using $\pi^!\omega_X \cong K_{\wt{X}}$.
\end{exa}

\subsection{Criterion for almost full sheaves}
Here we give various equivalent conditions for a locally-free sheaf on $\wt{X}$ to be almost full.

\begin{lem}\label{lem:mck01}
 Let $\mc{M}$ is a locally-free sheaf on $\wt{X}$. Then, $\mc{M}$ is almost full if and only if 
 the following cohomological condition holds:
 \[H^i_E(\mc{M}) \cong H^i(\mc{M})\, \mbox{ for all } 1 \le i \le d-2\, \mbox{ and }\, H^{d-1}_E(\mc{M}) \to H^{d-1}(\mc{M})\, 
 \mbox{ is injective}. \]
\end{lem}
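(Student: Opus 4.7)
The plan is to translate both sides of the claimed equivalence into statements about the cohomology of $\mc{M}$ on the open piece $\wt{U} := \pi^{-1}(U) \cong U$, and then match them up using the two local cohomology long exact sequences provided by Proposition \ref{prop:loc} (one for $\{x\} \subset X$, the other for $E \subset \wt{X}$).

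First, I would apply \cite[Proposition $1.2$]{yoshi} (as already used in the proof of Theorem \ref{thm:tf}) to rewrite ``$\pi_*\mc{M}$ is maximal Cohen-Macaulay'' as the vanishing $H^i_x(\pi_*\mc{M}) = 0$ for $0 \le i \le d-1$. Since $X$ is affine and $\pi$ restricts to an isomorphism over $U$, Proposition \ref{prop:loc} collapses to the identification
\[H^i(\wt{U}, \mc{M}|_{\wt{U}})\, =\, H^i(U, \pi_*\mc{M}|_U)\, \cong\, H^{i+1}_x(\pi_*\mc{M})\, \mbox{ for } i \ge 1,\]
plus a short exact sequence in low degrees that, together with $H^0_E(\mc{M}) = 0$ (automatic from local freeness of $\mc{M}$ and $\mathrm{codim}_{\wt{X}}E \ge 2$), yields $H^0_x(\pi_*\mc{M}) = 0$ and $H^1_x(\pi_*\mc{M}) = \mathrm{coker}\bigl(H^0(\wt{X}, \mc{M}) \to H^0(\wt{U}, \mc{M})\bigr)$. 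So being MCM becomes: $H^1_x(\pi_*\mc{M}) = 0$ together with $H^i(\wt{U}, \mc{M}) = 0$ for $1 \le i \le d-2$.

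Next, I would feed this into the second local cohomology sequence. Writing $\alpha_i: H^i_E(\mc{M}) \to H^i(\wt{X}, \mc{M})$ for the natural map, that sequence breaks into short exact pieces
\[0 \to \mathrm{coker}(\alpha_i) \to H^i(\wt{U}, \mc{M}) \to \ker(\alpha_{i+1}) \to 0\, \mbox{ for } i \ge 1,\]
and in its low-degree portion identifies $\mathrm{coker}(H^0(\mc{M}) \to H^0(\wt{U}, \mc{M}))$ with $\ker(\alpha_1)$, so that $H^1_x(\pi_*\mc{M}) = \ker(\alpha_1)$. Putting the two translations together: MCM-ness is equivalent to $\alpha_1$ injective, $\alpha_i$ surjective for $1 \le i \le d-2$, and $\alpha_{i+1}$ injective for $1 \le i \le d-2$. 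Combining the injectivity and surjectivity conditions in the overlapping ranges gives exactly ``$\alpha_i$ is an isomorphism for $1 \le i \le d-2$ and $\alpha_{d-1}$ is injective'', which is the stated criterion.

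The whole argument is a clean diagram chase across the two Proposition \ref{prop:loc} sequences, and I do not expect a serious obstacle. The only place that requires genuine care is the bookkeeping at the endpoints: the low-degree identification $H^1_x(\pi_*\mc{M}) = \ker(\alpha_1)$ (which is not just a degree shift of the generic pattern) and the asymmetric top-degree condition at $i = d-1$, where only injectivity of $\alpha_{d-1}$ survives rather than bijectivity. I would write those two boundary cases out in full detail and let the intermediate degrees follow from the uniform pattern above.
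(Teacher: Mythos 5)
Your proposal is correct and takes essentially the same approach as the paper: both translate the MCM condition into vanishing of $H^i_x(\pi_*\mc{M})$, use the local cohomology sequence on $X$ to rewrite this in terms of $H^i(U, \pi_*\mc{M}) = H^i(\wt{U}, \mc{M})$, and then feed it into the local cohomology sequence for $E \subset \wt{X}$. The paper's proof is simply a terser version of the same diagram chase; your extra bookkeeping at the endpoints (especially the identification $H^1_x(\pi_*\mc{M}) \cong \ker(\alpha_1)$ and the asymmetric injectivity at $i = d-1$) is exactly the detail the paper elides.
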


\begin{proof}
    Note that, $M:=\pi_*\mc{M}$ is a maximal Cohen-Macaulay $\mo_X$-module if and only if \[H^i_x(M)=0,\,  \mbox{ for all }\, i \not= d.\]
    Since $X$ is local, the local cohomology exact sequence (see Proposition \ref{prop:loc}) implies that 
    \[H^i(U,M)=0,\, \mbox{ for all }\, i \not\in \{0,d-1\}\, \mbox{ and }\, H^0(M)=H^0(U,M).\]
    Using the local cohomology exact sequence once more, this is equivalent to 
    \[H^i_E(\mc{M}) \to H^i(\mc{M})\, \mbox{ is an isomorphism for } 1 \le i \le d-2\, \mbox{ and } H^{d-1}_E(\mc{M}) \to H^{d-1}(\mc{M})\, 
 \mbox{ is injective}. \]
 This proves the lemma.
\end{proof}

\begin{lem}\label{lem:nor}
 Let $\mc{L}$ be an invertible sheaf such that there exists a non-zero section $s \in H^0(\mc{L})$
 with irreducible, non-singular zero locus $\wt{D}:=Z(s)$. Suppose $d \ge 3$ and $D:=\pi(\wt{D})$. Then, 
 \begin{enumerate}
     \item  $\mc{L}^{\vee}$ is almost full if and only if $\mo_D$ is a Cohen-Macaulay $\mo_X$-module. In particular, if $\mc{L}^{\vee}$
     is almost full, then  $D$ is normal.
 \item Suppose that $X$ is Gorenstein. Then, $\mc{L}$ is almost full if and only if $\mo_D$ is a Cohen-Macaulay $\mo_X$-module. In particular, 
 if $\mc{L}$ is almost full, then $D$ is normal.
 \end{enumerate}
\end{lem}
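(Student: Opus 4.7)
The strategy is to apply $\pi_*$ to the Koszul-type short exact sequence
\[0 \to \mc{L}^{\vee} \xrightarrow{s} \mo_{\wt{X}} \to \mo_{\wt{D}} \to 0\]
and exploit $R\pi_*\mo_{\wt{X}} \cong \mo_X$ from Remark \ref{rem:crep} to obtain
\[0 \to \pi_*\mc{L}^{\vee} \to \mo_X \to \pi_*\mo_{\wt{D}} \to R^1\pi_*\mc{L}^{\vee} \to 0.\]
Since $d \ge 3$ and the singularity of $X$ is isolated, $X$ is Cohen-Macaulay and regular in codimension one, hence normal by Serre's criterion. As $\dim E \le d-2 < \dim\wt{D}$, the image $D = \pi(\wt{D})$ is an integral Weil divisor in $X$, so $\I_{D|X}$ is rank-one reflexive. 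The sheaf $\pi_*\mc{L}^{\vee}$ is also rank-one reflexive (Hartogs extension across the codimension-$\ge 2$ locus $E \subset \wt{X}$ gives $S_2$ at $x$; elsewhere $\pi$ is an isomorphism). These two rank-one reflexive subsheaves of $\mo_X$ coincide on $U=X\setminus\{x\}$, whose complement has codimension $d \ge 3$, so they agree globally, identifying the four-term sequence with the short exact sequence
\[0 \to \I_{D|X} \to \mo_X \to \mo_D \to 0.\]

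Part (1) now reduces to the depth lemma: since $\mr{depth}(\mo_X) = d$, one has $\mr{depth}(\I_{D|X}) = d$ if and only if $\mr{depth}(\mo_D) = d-1$; equivalently, $\pi_*\mc{L}^{\vee}$ is maximal Cohen-Macaulay if and only if $\mo_D$ is Cohen-Macaulay. For the normality clause, the singular locus of $D$ is contained in $\pi(\wt{D} \cap E) \subseteq \{x\}$, which has codimension $d-1 \ge 2$ in $D$, so $D$ is $R_1$; Cohen-Macaulayness of $\mo_D$ provides $S_2$; hence $D$ is normal by Serre's criterion.

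For Part (2), the Gorenstein hypothesis gives $\omega_X \cong \mo_X$ and $K_{\wt{X}} \cong \mo_{\wt{X}}$. The same reflexivity argument identifies $\pi_*\mc{L}$ with $\Hc_X(\I_{D|X},\mo_X)=\I_{D|X}^{\vee}$. Over a Gorenstein local ring, a rank-one reflexive module $M$ is maximal Cohen-Macaulay if and only if $M^{\vee}$ is, since $\mr{Ext}^i_X(M,\mo_X) = 0$ for all $i > 0$ characterizes MCM when $\omega_X \cong \mo_X$, and this condition is self-dual via $M \cong M^{\vee\vee}$. Combining with Part (1), $\mc{L}$ is almost full iff $\I_{D|X}^{\vee}$ is MCM iff $\I_{D|X}$ is MCM iff $\mo_D$ is Cohen-Macaulay; the normality conclusion then follows exactly as in (1).

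The main obstacle I foresee is the clean identification $\pi_*\mc{L}^{\vee} = \I_{D|X}$: a priori the kernel of $\mo_X \to \pi_*\mo_{\wt{D}}$ could define a subscheme with embedded components or non-reduced structure at $x$, which would muddle the depth-lemma bookkeeping and break the final equivalence. The leverage to avoid this comes from the normality of $X$, ensuring that rank-one reflexive subsheaves of $\mo_X$ are uniquely determined by their restriction to the complement of the codimension-$\ge 3$ singular point.
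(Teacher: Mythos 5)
Your proof is correct and takes essentially the same route as the paper: both push the Koszul sequence for $s$ forward under $\pi$, apply the depth lemma to the resulting short exact sequence $0 \to \pi_*\mc{L}^{\vee} \to \mo_X \to \mo_D \to 0$, and for part (2) invoke the fact that over a Gorenstein local ring a reflexive module is MCM if and only if its dual is. The only cosmetic difference is that for (2) you identify $\pi_*\mc{L}$ with $\I_{D|X}^{\vee}$ directly by reflexivity on the complement of $\{x\}$, whereas the paper dualizes the pushforward of $0 \to \mo_{\wt{X}} \to \mc{L} \to \N_{\wt{D}|\wt{X}} \to 0$ and identifies the kernel $\mc{A}$ of $\phi$ with $\mo_D$; both arrive at the same short exact sequence and the same depth-lemma equivalence.
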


\begin{proof}
Consider the short exact sequences:
\begin{align*}
  &  0 \to \mo_{\wt{X}} \xrightarrow{s} \mc{L} \to \N_{\wt{D}|\wt{X}} \to 0 \\
  & 0 \to \mc{L}^{\vee} \xrightarrow{s} \mo_{\wt{X}} \to \mo_{\wt{D}} \to 0
\end{align*}
Applying $\pi_*$, we get the short exact sequences:
\begin{align}
  &  0 \to \mo_{X} \to \pi_*\mc{L} \to \pi_*\N_{\wt{D}|\wt{X}} \to 0 \label{eq:nor-1}\\
  & 0 \to \pi_*\mc{L}^{\vee} \to \mo_X \to \mo_{D} \to 0 \label{eq:nor-2}
\end{align}
The exactness of \eqref{eq:nor-1} follows from $R^1\pi_*\mo_{\wt{X}}=0$ (Remark \ref{rem:crep}).
The exactness of \eqref{eq:nor-2} follows from the fact that the morphism from $\mo_X$ to $\pi_*\mo_{\wt{D}}$ factors
through $\mo_D$. 

 $(1)$ Using depth comparison on \eqref{eq:nor-2}, we conclude that $\mo_D$ is Cohen-Macaulay if and only if $\mc{L}^{\vee}$ is almost full.
 In particular, if $\mc{L}^{\vee}$ is almost full then $D$ satisfies Serre's criterion $S_2$ and $R_1$ (as 
 $D$ has isolated singularity). Therefore, $D$ is normal.

 $(2)$ Suppose that $X$ is Gorenstein. 
 Dualizing \eqref{eq:nor-1} gives us:
 \begin{equation}\label{eq:dual}
     0 \to (\pi_*\mc{L})^{\vee} \to \mo_X \to \mr{Ext}^1_X(\pi_*\N_{\wt{D}|\wt{X}}, \mo_X) \xrightarrow{\phi} \mr{Ext}^1_X(\pi_*\mc{L},\mo_X)
 \end{equation}
 Note that, $\mr{Ext}^1_X(\pi_*\N_{\wt{D}|\wt{X}}, \omega_X)$ is of rank one as an $\mo_D$-module and $\mr{Ext}^1_X(\pi_*\mc{L},\mo_X)$ is supported on $x$. 
 Denote by $\mc{A}$ the kernel of the morphism $\phi$. Since there is a 
 surjective morphism from $\mo_X$ to $\mc{A}$, this implies $\mc{A} \cong \mo_D$.
 Then, $\mc{L}$ is almost full if and only if $\pi_*\mc{L}$ is maximal Cohen-Macaulay if and only if 
 $(\pi_*\mc{L})^{\vee}$ is maximal Cohen-Macaualay. 
 Using depth comparison in \eqref{eq:dual}, $(\pi_*\mc{L})^{\vee}$ is maximal Cohen-Macaulay
 if and only if $\mo_D$ is Cohen-Macaulay. The rest of $(2)$ follows identically as $(1)$. This proves the lemma.
 \end{proof}



 \subsection{Rank one almost full sheaves}\label{sec:alm}
Let $\mc{L}$ be an invertible sheaf on $\wt{X}$.
Suppose there exists a section $s \in H^0(\mc{L})$ such that the zero locus $\wt{D}:=Z(s)$ of the section $s$ is irreducible and  
non-singular.
Denote by $D:=\pi(\wt{D})$ the (reduced) scheme-theoretic image of $\wt{D}$.

\begin{thm}\label{thm:mck}
The invertible sheaf $\mc{L}^{\vee}$ is almost full if and only if \[H^i(\mo_{\wt{D}})=0,\, \mbox{ for all }\, 1 \le i <d-1\, \mbox{ and }
H^0(\mo_{\wt{X}}) \to H^0(\mo_{\wt{D}})\, \mbox{ is surjective}.\]
\end{thm}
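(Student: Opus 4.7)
\emph{Plan.} The plan is to reduce Theorem \ref{thm:mck} to the cohomological criterion of Lemma \ref{lem:mck01} applied to $\mc L^\vee$, by computing both sides of that criterion explicitly using the defining Koszul sequence
\[
0 \to \mc L^\vee \xrightarrow{s} \mo_{\wt X} \to \mo_{\wt D} \to 0.
\]

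First I would translate the right-hand side of the theorem into cohomological vanishings for $\mc L^\vee$ itself. Applying the long exact sequence of cohomology on $\wt X$ to the Koszul sequence, together with $R\pi_*\mo_{\wt X} = \mo_X$ (Remark \ref{rem:crep}) and affineness of $X$ (which force $H^i(\wt X, \mo_{\wt X}) = 0$ for $i \ge 1$), one reads off $H^i(\mo_{\wt D}) \cong H^{i+1}(\mc L^\vee)$ for $i \ge 1$ and $H^1(\mc L^\vee) = \mathrm{coker}(H^0(\mo_{\wt X}) \to H^0(\mo_{\wt D}))$. Thus the right-hand side of the theorem translates directly into vanishing of $H^j(\wt X, \mc L^\vee)$ in the appropriate range of $j$.

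Second, to express ``$\mc L^\vee$ is almost full'' in cohomological terms, I would invoke Lemma \ref{lem:mck01} and then compute the local cohomology $H^i_E(\mc L^\vee)$ via the local-cohomology long exact sequence of the same Koszul sequence. The calculation reduces to the vanishing of $H^i_E(\mo_{\wt X})$ and $H^{i-1}_E(\mo_{\wt D})$. For $\mo_{\wt X}$, Theorem \ref{thm:locdual} identifies $H^i_E(\mo_{\wt X})$ with $H^{d-i}(\wt X, K_{\wt X})^\vee$ for $0 < i < d$; Grauert--Riemenschneider for $\pi$ (as in Remark \ref{rem:crep}) together with the affineness of $X$ then forces this to vanish since $H^{d-i}(\wt X, K_{\wt X}) = H^{d-i}(X, \omega_X) = 0$ whenever $d-i \ge 1$. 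For $\mo_{\wt D}$, observe that $\wt D$ is smooth of dimension $d-1$ and $\pi|_{\wt D}: \wt D \to D$ is a resolution of the local noetherian scheme $D$, whose singularity is confined to $\{x\}$ (the image of $\wt D \cap E$). Applying Theorem \ref{thm:locdual} to $\pi|_{\wt D}$, one identifies $H^{i-1}_E(\mo_{\wt D}) = H^{i-1}_{E \cap \wt D}(\wt D, \mo_{\wt D})$ with $H^{d-i}(\wt D, K_{\wt D})^\vee$, which vanishes in the required range by Grauert--Riemenschneider applied to $\pi|_{\wt D}$ together with the affineness of $D$.

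With these vanishings in hand, the isomorphism/injectivity criterion of Lemma \ref{lem:mck01} collapses to the appropriate vanishing of $H^i(\mc L^\vee)$, and matching with the first step yields the desired equivalence. The main technical step is the second part of the local-cohomology computation: verifying that $\pi|_{\wt D}: \wt D \to D$ genuinely satisfies the hypotheses of both Theorem \ref{thm:locdual} and Grauert--Riemenschneider (in particular that $D$ has at worst an isolated singularity and $\pi|_{\wt D}$ is a resolution), and carefully tracking the index ranges through the two long-exact-sequence reductions so that the cohomology of $\mo_{\wt D}$ extracted from the almost-full criterion is exactly the one stated.
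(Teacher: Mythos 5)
Your plan is essentially the same as the paper's argument, just carried out on the ``primal'' side rather than the ``$K_{\wt X}$-twisted'' side. The paper first dualizes the criterion of Lemma~\ref{lem:mck01} via Theorem~\ref{thm:locdual}, transforming it into conditions on $H^\bullet_E(\mc L\otimes K_{\wt X})$, and then runs the long exact sequence of the twisted Koszul sequence $0\to K_{\wt X}\to\mc L\otimes K_{\wt X}\to K_{\wt D}\to 0$, using Grauert--Riemenschneider for $\pi$ and $\pi|_{\wt D}$; local duality on $\wt D$ then converts $H^i_E(K_{\wt D})$ into $H^{d-1-i}(\mo_{\wt D})^\vee$. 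You instead apply Lemma~\ref{lem:mck01} directly to $\mc L^\vee$, run the untwisted Koszul sequence, and use local duality plus Grauert--Riemenschneider to show that $H^i_E(\mo_{\wt X})$ and $H^{i-1}_E(\mo_{\wt D})$ vanish unconditionally in the relevant range, so that $H^i_E(\mc L^\vee)$ vanishes and the criterion collapses to the vanishing of $H^i(\mc L^\vee)$. These are mirror-image bookkeepings of one and the same computation; they use exactly the same inputs (Koszul sequence, Grauert--Riemenschneider, Theorem~\ref{thm:locdual} applied to both $\pi$ and $\pi|_{\wt D}$, and Lemma~\ref{lem:mck01}). Your version is arguably slightly more streamlined since you never need to invoke the adjunction $K_{\wt D}\cong K_{\wt X}\otimes\N_{\wt D|\wt X}$ explicitly, and the paper in fact also passes through the untwisted sequence \eqref{eq:zero} at the end to handle the surjectivity condition, so your route really does retrace the paper's.

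One point you flag but do not resolve, and which is worth making explicit when ``carefully tracking the index ranges'': what Lemma~\ref{lem:mck01} together with the unconditional vanishing $H^i_E(\mc L^\vee)=0$ for $1\le i\le d-1$ gives you is that $\mc L^\vee$ is almost full if and only if $H^i(\mc L^\vee)=0$ for $1\le i\le d-2$, which via the Koszul sequence translates to surjectivity of the restriction together with $H^j(\mo_{\wt D})=0$ for $1\le j\le d-3$ only. The remaining equality $H^{d-2}(\mo_{\wt D})=0$ demanded by the theorem is in fact automatic: from the Koszul sequence $H^{d-2}(\mo_{\wt D})\cong H^{d-1}(\mc L^\vee)$, and the latter vanishes because the fibers of $\pi$ have dimension at most $\dim E\le d-2$, so $R^i\pi_*=0$ for $i>d-2$ and $X$ is affine. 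So there is no discrepancy, but a line to this effect is needed to close the equivalence. You should also note that $H^0_E(\mo_{\wt D})=0$ by torsion-freeness to cover the $i=1$ case of your local-cohomology vanishing, since Theorem~\ref{thm:locdual} only applies in the range $0<i<\dim\wt D$.
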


\begin{proof}
      Combining Lemma \ref{lem:mck01} and Theorem \ref{thm:locdual}, $\mc{L}^{\vee}$ is almost full
    if and only if  the following cohomological conditions are satisfied:
     \[H^i_E(\mc{L} \otimes K_{\wt{X}}) \cong H^i(\mc{L} \otimes K_{\wt{X}})\, \mbox{ for } 2 \le i \le d-1\, \mbox{ and } H^1_E(\mc{L} \otimes K_{\wt{X}}) \to H^1(\mc{L} \otimes K_{\wt{X}})\, \mbox{ is surjective}.\]     
  Consider the short exact sequence:
  \begin{equation}\label{eq:one-1}
      0 \to \mo_{\wt{X}} \to \mc{L} \to \N_{\wt{D}|\wt{X}} \to 0
  \end{equation}
  Recall, $K_{\wt{D}} \cong K_{\wt{X}} \otimes \N_{\wt{D}|\wt{X}}$. 
  Tensoring \eqref{eq:one-1} by $K_{\wt{X}}$  and considering the associated cohomology long exact 
     we get the following commutative diagram of exact sequences:
     \begin{equation}\label{eq05}
         \begin{diagram}
        H^{i}_E(K_{\wt{X}})&\rTo&H^i_E(\mc{L} \otimes K_{\wt{X}})&\rTo&H^i_E(K_{\wt{D}})&\rTo&H^{i+1}_E(K_{\wt{X}})\\
         \dTo&\circlearrowleft&\dTo&\circlearrowleft&\dTo&\circlearrowleft&\dTo\\
         H^{i}(K_{\wt{X}})&\rTo&H^i(\mc{L} \otimes K_{\wt{X}})&\rTo&H^i(K_{\wt{D}})&\rTo&H^{i+1}(K_{\wt{X}})
     \end{diagram}
     \end{equation}
  By Grauert-Riemenschneider vanishing theorem, we have:
  \[H^i(K_{\wt{X}})=0=H^i(K_{\wt{D}})\, \mbox{ for } i \ge 1, \mbox{ and } H^i_E(K_{\wt{X}})=H^{d-i}(\mo_{\wt{X}})=0\, \mbox{ for } i \not\in \{0,d\}.\]
  Therefore, $H^i_E(\mc{L} \otimes K_{\wt{X}})\cong H^{i}_E(K_{\wt{D}})$ for $1 \le i < d-1$ and $H^i(\mc{L} \otimes K_{\wt{X}})=0$ for all $i>0$.
  This means that $\mc{L}^{\vee}$ is almost full if and only if 
  \[0= H^i(\mc{L} \otimes K_{\wt{X}}) \cong H^i_E(\mc{L} \otimes K_{\wt{X}}) \cong H^i_E(K_{\wt{D}})=H^{d-i-1}(\mo_{\wt{D}})\, \mbox{ for } 1 \le i < d-1\, \mbox{ and } \]\[
  H^{d-1}_E(\mc{L} \otimes K_{\wt{X}}) \cong H^{d-1}(\mc{L} \otimes K_{\wt{X}}) = 0.\]
  By Serre duality, $H^{d-1}_E(\mc{L} \otimes K_{\wt{X}})=H^1(\mc{L}^{\vee})$.
  Since $H^1(\mo_{\wt{X}})=0$, the following short exact sequence implies that $H^1(\mc{L}^{\vee})=0$ if and only if the restriction map from 
  $H^0(\mo_{\wt{X}})$ to $H^0(\mo_{\wt{D}})$ is surjective:
  \begin{equation}\label{eq:zero}
     0 \to \mc{L}^{\vee} \to \mo_{\wt{X}} \to \mo_{\wt{D}} \to 0.
 \end{equation}
This proves the theorem.
\end{proof}

\begin{rem}\label{rem:pd}
 Let $\mc{F}$ be a coherent $\mo_{\wt{X}}$-module. Dual of coherent sheaves are reflexive i.e., $\mc{F}^{\vee}$ and $(\pi_*\mc{F})^{\vee}$ are reflexive
 modules on $\wt{X}$ and $X$, respectively. 
 Pushforward of reflexive sheaves under small resolutions are reflexive i.e., $\pi_*(\mc{F}^{\vee})$ is a reflexive $\mo_X$-module.
 Since $(\pi_*\mc{F})^{\vee}$ and $\pi_*(\mc{F}^{\vee})$ agree over the open set $U$ and $\mr{codim}(X\backslash U) \ge 2$, this means 
 $(\pi_*\mc{F})^{\vee} \cong \pi_*(\mc{F}^{\vee})$.
\end{rem}

\begin{lem}\label{lem:dual}
Suppose that $X$ is Gorenstein. 
    If $\mc{L}$ is almost full, then so is $\mc{L}^{\vee}$.
    Conversely, if $\mc{L}^{\vee}$ is almost full, then $\mc{L}$ and $\mc{L} \otimes \mo_{\wt{D}}$ are both almost full.
\end{lem}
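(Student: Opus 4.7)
The plan is to handle the two parts separately. The first assertion, that $\mc{L}$ almost full implies $\mc{L}^{\vee}$ almost full, reduces to the standard fact that $\mo_X$-duality preserves maximal Cohen-Macaulay modules when $X$ is Gorenstein. The second assertion requires additionally the Cohen-Macaulayness of $\pi_*\N_{\wt{D}|\wt{X}}\cong\pi_*(\mc{L}\otimes\mo_{\wt{D}})$, which will be extracted from the short exact sequence \eqref{eq:nor-1} via depth comparison.

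For the first part, I would argue as follows. Over a Gorenstein local ring $X$, a finitely generated $\mo_X$-module $M$ is maximal Cohen-Macaulay if and only if its dual $\mr{Hom}_X(M,\mo_X)$ is so: this is a classical consequence of local duality together with $\omega_X\cong\mo_X$, using that $\mr{Ext}^i_X(M,\mo_X)=0$ for $i>0$ and the reflexivity $M\cong M^{\vee\vee}$. Combining this with Remark \ref{rem:pd}, which gives $(\pi_*\mc{L})^{\vee}\cong\pi_*(\mc{L}^{\vee})$ because $\pi$ is small, one immediately gets that $\pi_*\mc{L}$ is maximal Cohen-Macaulay iff $\pi_*(\mc{L}^{\vee})$ is maximal Cohen-Macaulay, i.e., $\mc{L}$ is almost full iff $\mc{L}^{\vee}$ is. As a sanity check, Lemma \ref{lem:nor}(1) and (2) also characterise both of these conditions by the single requirement that $\mo_D$ be Cohen-Macaulay.

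For the second part, assume $\mc{L}^{\vee}$ is almost full. By the first part, $\mc{L}$ is almost full as well, so $\pi_*\mc{L}$ is a maximal Cohen-Macaulay $\mo_X$-module of depth $d$. Using $\mc{L}\otimes\mo_{\wt{D}}\cong\N_{\wt{D}|\wt{X}}$, I would apply the depth lemma to the short exact sequence \eqref{eq:nor-1}:
\begin{equation*}
0\to\mo_X\to\pi_*\mc{L}\to\pi_*\N_{\wt{D}|\wt{X}}\to 0.
\end{equation*}
Since $\mathrm{depth}(\mo_X)=d$ and $\mathrm{depth}(\pi_*\mc{L})=d$, the depth lemma yields $\mathrm{depth}(\pi_*\N_{\wt{D}|\wt{X}})\geq d-1$. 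On the other hand, $\pi_*\N_{\wt{D}|\wt{X}}$ is an $\mo_D$-module with $\dim D=d-1$, so its depth is at most $d-1$. Equality forces $\pi_*\N_{\wt{D}|\wt{X}}$ to be maximal Cohen-Macaulay over $\mo_D$, i.e., $\mc{L}\otimes\mo_{\wt{D}}$ is almost full.

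I do not anticipate a serious obstacle: the first half is a short consequence of Gorenstein duality combined with Remark \ref{rem:pd}, and the second half is essentially a one-line depth-chase in \eqref{eq:nor-1}. The only minor conceptual subtlety is that $\mc{L}\otimes\mo_{\wt{D}}$ is not locally free on $\wt{X}$, so ``almost full'' for it must be interpreted as $\pi_*(\mc{L}\otimes\mo_{\wt{D}})$ being maximal Cohen-Macaulay as an $\mo_D$-module (depth equal to $\dim D$), which is precisely what the depth argument delivers.
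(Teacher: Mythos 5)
Your proof is correct and follows the paper's argument step for step: part one via Gorenstein duality for MCM modules plus the identification $(\pi_*\mc{L})^{\vee}\cong\pi_*(\mc{L}^{\vee})$ from Remark \ref{rem:pd}, part two via depth comparison in \eqref{eq:nor-1} after first upgrading to $\mc{L}$ almost full. One small caution: your headline biconditional ``$M$ is MCM iff $M^{\vee}$ is'' is false for arbitrary finitely generated modules (a nonzero torsion module dualizes to $0$); it holds only for reflexive $M$, which is what your appeal to $M\cong M^{\vee\vee}$ quietly uses and what holds here since $\pi_*\mc{L}$ is reflexive --- the paper avoids this by simply applying the one-directional statement (MCM implies the dual is MCM) twice.
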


\begin{proof}
 Suppose that $\mc{L}$ is almost full. This implies $\pi_*\mc{L}$ is maximal Cohen-Macaulay. Since $X$ is Gorenstein, 
 $(\pi_*\mc{L})^{\vee}$ is also maximal Cohen-Macaulay. Using Remark \ref{rem:pd}, we conclude 
 that $(\pi_*\mc{L})^{\vee} \cong \pi_*(\mc{L}^{\vee})$ i.e., $\mc{L}^{\vee}$ is almost full. This proves the first part of the lemma.

   Suppose now that $\mc{L}^{\vee}$ is almost full.  Arguing as above, we have $\mc{L}$ is almost full. We now show that 
   $\mc{L} \otimes \mo_{\wt{D}}$ is almost full as an $\mo_{\wt{D}}$-module.  
   Note that, $\N_{\wt{D}|\wt{X}} \cong \mc{L} \otimes \mo_{\wt{D}}$.
   Since $\mc{L}$ is almost full, $\pi_*\mc{L}$ is maximal Cohen-Macaulay. Applying depth comparison to 
   \eqref{eq:nor-1} we conclude that $\pi_*\N_{\wt{D}|\wt{X}} \cong \pi_*\left(\mc{L} \otimes \mo_{\wt{D}}\right)$
   is Cohen-Macaulay. Therefore, $\mc{L} \otimes \mo_{\wt{D}}$ is almost full as an $\mo_{\wt{D}}$-module.
   Thie proves the lemma.
\end{proof}

\subsection{Tensor product of almost full sheaves}
  Let $\mc{L}_1$ and $\mc{L}_2$ be two invertible sheaves on $\wt{X}$ such that the dual sheaves 
  $\mc{L}_1^{\vee}$ and $\mc{L}_2^{\vee}$ are almost full. Suppose there exist 
    sections $s_1 \in H^0(\mc{L}_1)$ and $s_2 \in H^0(\mc{L}_2)$ such that the zero loci
    $Z(s_1)$, $Z(s_2)$ and $Z(s_1).Z(s_2)$ are non-singular and intersects the exceptional locus $E$ properly.
    Assume further that $Z(s_1)$ intersects $Z(s_2)$ properly.
    Using Lemma \ref{lem:nor}, the image under $\pi$ of the zero loci $\pi(Z(s_1))$ and $\pi(Z(s_2))$ are Cohen-Macaulay schemes
    with isolated singularities. In particular, the restriction of $\pi$ to these zero loci are small resolutions of 
    $\pi(Z(s_1))$ and $\pi(Z(s_2))$.
 Under these assumptions we prove:
    
\begin{thm}\label{thm:tens}
Let $\mc{L}_1, \mc{L}_2$ and $s_2 \in H^0(\mc{L}_2)$ be as above.  
Assume further that a general element of the linear system $|\mc{L}_1 \otimes \mc{L}_2|$ is non-singular intersecting 
 $E$ properly.
  If $\mc{L}_1^{\vee}|_{Z(s_2)}$ is almost full as an $\mo_{Z(s_2)}$-module, then $\mc{L}_1^{\vee} \otimes \mc{L}_2^{\vee}$ is almost full.    
\end{thm}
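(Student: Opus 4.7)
The plan is to verify the cohomological criterion of Theorem \ref{thm:mck} applied to the line bundle $\mc{L} := \mc{L}_1 \otimes \mc{L}_2$ and a general smooth member $\wt{D} \in |\mc{L}|$ (which exists by the new hypothesis that a general element is nonsingular and meets $E$ properly). Concretely, it suffices to show that $H^i(\mo_{\wt D}) = 0$ for $1 \le i < d-1$ and that $H^0(\mo_{\wt X}) \to H^0(\mo_{\wt D})$ is surjective. The key simplification is that both conditions depend only on the line bundle class of $\mc{L}$ via the defining short exact sequence $0 \to \mc{L}^{\vee} \to \mo_{\wt X} \to \mo_{\wt D} \to 0$, so I may replace $\wt D$ by the reducible Cartier divisor $Z := D_1 + D_2$ with $D_j := Z(s_j)$ (whose associated line bundle is again $\mc{L}$); the problem becomes computing $H^i(\mo_Z)$ and verifying surjectivity onto $H^0(\mo_Z)$.

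Since $D_1$ and $D_2$ are smooth Cartier divisors meeting transversally (as $D_1 \cap D_2$ is smooth by hypothesis), the Mayer--Vietoris short exact sequence
\[ 0 \to \mo_Z \to \mo_{D_1} \oplus \mo_{D_2} \to \mo_{D_1 \cap D_2} \to 0 \]
is exact on $\wt X$. I feed in the cohomological inputs: Theorem \ref{thm:mck} applied to each $\mc{L}_j^{\vee}$ gives $H^i(\mo_{D_j}) = 0$ for $1 \le i \le d-2$ together with the surjection $H^0(\mo_{\wt X}) \twoheadrightarrow H^0(\mo_{D_j})$; and the hypothesis that $\mc{L}_1^{\vee}|_{D_2}$ is almost full on the $(d-1)$-dimensional small resolution $\pi|_{D_2} \colon D_2 \to \pi(D_2)$, combined with Theorem \ref{thm:mck} applied with the section $s_1|_{D_2}$ (whose zero locus is the smooth $D_1 \cap D_2$), yields $H^i(\mo_{D_1 \cap D_2}) = 0$ for $1 \le i \le d-3$ and the surjection $H^0(\mo_{D_2}) \twoheadrightarrow H^0(\mo_{D_1 \cap D_2})$. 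Running the long exact sequence of Mayer--Vietoris, for $2 \le i \le d-2$ the group $H^i(\mo_Z)$ is sandwiched between $H^{i-1}(\mo_{D_1 \cap D_2})=0$ and $H^i(\mo_{D_j})=0$ and so vanishes; for $i=1$, the relevant cokernel vanishes because $H^0(\mo_{D_2}) \to H^0(\mo_{D_1 \cap D_2})$ is already surjective. This establishes condition (i).

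For condition (ii) I lift an arbitrary pair $(f_1,f_2) \in H^0(\mo_Z)$ (agreeing on $D_1 \cap D_2$) in two steps: first lift $f_1$ to $\tilde f \in H^0(\mo_{\wt X})$ via almost fullness of $\mc{L}_1^{\vee}$; then $g := f_2 - \tilde f|_{D_2}$ vanishes on $D_1 \cap D_2$, so it lies in $H^0(D_2, \mc{L}_1^{\vee}|_{D_2})$ (since $D_1 \cap D_2$ is cut out in $D_2$ by $s_1|_{D_2}$); lifting $g$ back to $H^0(\mc{L}_1^{\vee}) \hookrightarrow H^0(\mo_{\wt X})$ and adding it to $\tilde f$ produces the required preimage. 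The hard part is precisely this last lift, i.e.\ showing that the restriction $H^0(\mc{L}_1^{\vee}) \to H^0(\mc{L}_1^{\vee}|_{D_2})$ is surjective, equivalently that the cokernel of the pushforward map $\pi_*\mc{L}_1^{\vee} \to \pi_*(\mc{L}_1^{\vee}|_{D_2})$ (each a maximal Cohen--Macaulay module, the first by almost fullness of $\mc{L}_1^{\vee}$ and the second by hypothesis) vanishes at the singular point $x$. Applying Lemma \ref{lem:nor} on both $\wt X$ and $D_2$ identifies these two pushforwards with the ideal sheaves of $\pi(D_1)$ in $X$ and of $\pi(D_1 \cap D_2)$ in $\pi(D_2)$ respectively, so the required surjectivity reduces to the scheme-theoretic identity $\pi(D_1) \cap \pi(D_2) = \pi(D_1 \cap D_2)$, which follows from the proper-intersection of $Z(s_1)\cdot Z(s_2)$ with $E$ combined with the Cohen--Macaulay properties obtained above.
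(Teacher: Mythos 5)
Your proposal follows essentially the same route as the paper: both apply the cohomological criterion of Theorem \ref{thm:mck}, replace the smooth member of $|\mc{L}_1\otimes\mc{L}_2|$ by the reducible divisor $\wt{D}_1\cup\wt{D}_2$ (since the criterion is intrinsic to the line bundle class via the short exact sequence $0\to\mc{L}^\vee\to\mo_{\wt X}\to\mo_{\wt D}\to 0$), and then run the Mayer--Vietoris sequence $0\to\mo_{\wt{D}_1\cup\wt{D}_2}\to\mo_{\wt{D}_1}\oplus\mo_{\wt{D}_2}\to\mo_{\wt{D}_1.\wt{D}_2}\to 0$ fed with the almost-fullness inputs for $\mc{L}_1^\vee$, $\mc{L}_2^\vee$, and $\mc{L}_1^\vee|_{\wt{D}_2}$. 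The only cosmetic difference is in how the $H^0$-surjectivity is handled: you chase sections explicitly and reduce to the scheme-theoretic identity $\pi(\wt{D}_1)\cap\pi(\wt{D}_2)=\pi(\wt{D}_1\cap\wt{D}_2)$, whereas the paper packages the same content more cleanly by identifying $H^0(\mo_{\wt{D}_1\cup\wt{D}_2})$ with $\mo_{D_1\cup D_2}$ via the universal property of the kernel and then observing $D_1\cup D_2\subset X$.
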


\begin{proof}
    For this we use the criterion given in Theorem \ref{thm:mck}. Denote by 
    \[\wt{D}_1:=Z(s_1),\, \wt{D}_2:= Z(s_2),\, D_1:=\pi(\wt{D}_1)\, \mbox{ and } D_2:=\pi(\wt{D}_2).\]
    Consider the short exact sequences: 
    \begin{align}
      &   0 \to (\mc{L}_1 \otimes \mc{L}_2)^{\vee} \to \mo_{\wt{X}} \to \mo_{\wt{D}_1 \cup \wt{D}_2} \to 0 \label{eq:tens-4}\\
     &   0 \to \mc{L}_1^{\vee} \to \mo_{\wt{X}} \to \mo_{\wt{D}_1}  \to 0 \label{eq:tens-1}
    \end{align}
       and the restriction to $\wt{D}_2$ (exactness follows because $\wt{D}_1$ intersects $\wt{D}_2$ properly):
    \[0 \to \mc{L}_1^{\vee}|_{\wt{D}_2} \to \mo_{\wt{D}_2} \to \mo_{\wt{D}_1.\wt{D}_2} \to 0 \]
    We first claim that $H^1((\mc{L}_1 \otimes \mc{L}_2)^{\vee})=0$. Indeed, since $H^1(\mo_{\wt{X}})=0$, it suffices to 
    show that the induced map from $H^0(\mo_{\wt{X}})$ to $H^0(\mo_{\wt{D}_1 \cup \wt{D}_2})$ is surjective.
    Consider the short exact sequence:
    \begin{equation}\label{eq:div}
        0 \to \mo_{\wt{D}_1 \cup \wt{D}_2} \to \mo_{\wt{D}_1} \oplus \mo_{\wt{D}_2} \to \mo_{\wt{D}_1.\wt{D}_2} \to 0.
    \end{equation}
    Since $\mc{L}_1^{\vee}, \mc{L}_2^{\vee}$ and $\mc{L}_1^{\vee}|_{\wt{D}_2}$ are almost full, we have (due to Zariski's main theorem, after observing that 
    $D_1, D_2, D_1.D_2$ are normal by Lemma \ref{lem:nor}):
    \[H^0(\mo_{\wt{D}_1})=\mo_{D_1},\, H^0(\mo_{\wt{D}_2})=\mo_{D_2}\, \mbox{ and } H^0(\mo_{\wt{D}_1.\wt{D}_2})=\mo_{D_1.D_2} \]
    Applying the $H^0(-)$ to \eqref{eq:div}, we get the short exact sequence (as $D_1.D_2 \subset D_1$ and $D_2$):
    \[0 \to H^0(\mo_{\wt{D}_1 \cup \wt{D}_2}) \to \mo_{D_1} \oplus \mo_{D_2} \to \mo_{D_1.D_2} \to 0.\]
    Using the universal property of kernel, this implies 
    \[H^0(\mo_{\wt{D}_1 \cup \wt{D}_2}) \cong \mo_{D_1 \cup D_2}.\]
    As $D_1 \cup D_2 \subset X$, this means that the morphism from $H^0(\mo_{\wt{X}})=\mo_X$ to $H^0(\mo_{\wt{D}_1 \cup \wt{D}_2}) \cong \mo_{D_1 \cup D_2}$
    is surjective. This proves our claim that $H^1((\mc{L}_1 \otimes \mc{L}_2)^{\vee})=0$.
    
    We now claim that for a general section $s_3 \in H^0(\mc{L}_1 \otimes \mc{L}_2)$, we have 
    \begin{equation}\label{eq:tens-3}
        h^i(\mo_{Z(s_3)})=0\, \mbox{ for } 1 \le i <d-1\, \mbox{ and } H^0(\mo_{\wt{X}}) \to H^0(\mo_{Z(s_3)})\, \mbox{ is surjective.}
    \end{equation}
    The surjectivity condition follows immediately from the above claim that $H^1((\mc{L}_1 \otimes \mc{L}_2)^{\vee})=0$.
    For the vanishing cohomologies of $Z(s_3)$ consider the short exact sequence \eqref{eq:div}. The associated long exact sequence gives us:
    \begin{equation}\label{eq:tens-2}
        ... \to H^{i-1}(\mo_{\wt{D}_1.\wt{D}_2}) \to H^i(\mo_{\wt{D}_1 \cup \wt{D}_2}) \to H^i(\mo_{\wt{D}_1}) \oplus H^i(\mo_{\wt{D}_2}) \to ...
    \end{equation}
     Since $\mc{L}_1^{\vee}, \mc{L}_2^{\vee}$ and $\mc{L}_1^{\vee}|_{\wt{D}_2}$ are almost full, Theorem \ref{thm:mck} implies 
    \[h^i(\mo_{\wt{D}_1})=0=h^i(\mo_{\wt{D}_2})\, \mbox{ for } 1 \le i < d-1 \, \mbox{ and } h^i(\mo_{\wt{D}_1.\wt{D}_2})=0\, \mbox{ for } 1 \le i <d-2.\]
 Applying this to \eqref{eq:tens-2} and using $H^0(\mo_{\wt{D}_1}) \oplus H^0(\mo_{\wt{D}_2}) \twoheadrightarrow H^0(\mo_{\wt{D}_1.\wt{D}_2})$,
 we conclude that $h^i(\mo_{\wt{D}_1 \cup \wt{D}_2})=0$ for $1 \le i <d-1$.
  Using \eqref{eq:tens-4} this implies $H^i((\mc{L}_1 \otimes \mc{L}_2)^{\vee})=0$ for 
  $1 \le i \le d-1$. 
  For a general $s_3 \in H^0(\mc{L}_1 \otimes \mc{L}_2)$ as above, we get the short exact sequence \eqref{eq:tens-4} as above 
  with $\wt{D}_1 \cup \wt{D}_2$ replaced by the zero locus $Z(s_3)$. Then, the vanishing of the cohomologies of 
  $(\mc{L}_1 \otimes \mc{L}_2)^{\vee}$ implies \eqref{eq:tens-3}.
    This proves the claim. Finally, Theorem \ref{thm:mck} implies that 
    $(\mc{L}_1 \otimes \mc{L}_2)^{\vee}$ is almost full. This proves the theorem.
\end{proof}

\subsection{Vanishing result}
We now show vanishing cohomologies of almost full sheaves.

\begin{prop}\label{prop:van}
Let $\mc{L}$ be an invertible sheaf such that there exists a section 
$s \in H^0(\mc{L})$ whose zero locus is non-singular and intersects $E$ properly.
    If $\mc{L}^{\vee}$ is almost full, then $H^i(\mc{L}^{\vee})=0$ for all $i>0$.
\end{prop}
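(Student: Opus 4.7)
The plan is to run the long exact sequence coming from the defining sequence of $\wt{D}:=Z(s)$ and read off the required vanishings directly from Theorem \ref{thm:mck} together with Remark \ref{rem:crep}.

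First, I would set up the short exact sequence
\[
0 \to \mc{L}^{\vee} \xrightarrow{s} \mo_{\wt{X}} \to \mo_{\wt{D}} \to 0
\]
and note that since $X$ is affine and $R\pi_*\mo_{\wt{X}}\cong\mo_X$ (Remark \ref{rem:crep}), one has $H^i(\mo_{\wt{X}})=0$ for all $i>0$. Applying Theorem \ref{thm:mck} to the hypothesis that $\mc{L}^{\vee}$ is almost full yields $H^i(\mo_{\wt{D}})=0$ for $1 \le i \le d-2$ and surjectivity of $H^0(\mo_{\wt{X}})\twoheadrightarrow H^0(\mo_{\wt{D}})$.

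Next, I would extract the cohomological consequences. The piece
\[
H^0(\mo_{\wt{X}}) \to H^0(\mo_{\wt{D}}) \to H^1(\mc{L}^{\vee}) \to H^1(\mo_{\wt{X}})=0
\]
immediately gives $H^1(\mc{L}^{\vee})=0$. For $2\le i\le d-1$, the sequence
\[
0=H^{i-1}(\mo_{\wt{X}})\to H^{i-1}(\mo_{\wt{D}})\to H^i(\mc{L}^{\vee})\to H^i(\mo_{\wt{X}})=0
\]
together with $H^{i-1}(\mo_{\wt{D}})=0$ (which holds since $1\le i-1\le d-2$) yields $H^i(\mc{L}^{\vee})=0$ in that range.

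Finally I must handle the top cohomologies $i\ge d-1$, which Theorem \ref{thm:mck} does not cover. Here I would exploit smallness: since $\dim(E)\le d-2$, one has $R^i\pi_*\mc{L}^{\vee}=0$ for $i\ge d-1$, and because $X$ is affine the Leray spectral sequence degenerates into $H^i(\mc{L}^{\vee})\cong H^0(X,R^i\pi_*\mc{L}^{\vee})$, giving $H^i(\mc{L}^{\vee})=0$ for $i\ge d-1$. Combining this with the previous step completes the proof. The only step I would worry about is making sure Theorem \ref{thm:mck} applies verbatim in this setup; but the hypotheses (non-singularity of $\wt{D}$, irreducibility, and proper intersection with $E$) are exactly what is assumed, so no additional work is needed beyond the bookkeeping above.
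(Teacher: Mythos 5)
Your proof is correct and follows essentially the same route as the paper: use the short exact sequence $0 \to \mc{L}^{\vee} \to \mo_{\wt{X}} \to \mo_{\wt{D}} \to 0$, feed in $H^i(\mo_{\wt{X}})=0$ from Remark \ref{rem:crep} and the vanishings from Theorem \ref{thm:mck}, and handle the top degrees by a dimension-of-fibers argument. Two minor differences worth noting: you read the surjectivity of $H^0(\mo_{\wt{X}})\to H^0(\mo_{\wt{D}})$ directly off Theorem \ref{thm:mck}, whereas the paper re-derives it via Lemma \ref{lem:nor} and Zariski's main theorem; and for $i\ge d-1$ you apply Leray for $\pi$ to $\mc{L}^{\vee}$ directly on $\wt{X}$, while the paper instead observes that $\pi_D:\wt{D}\to D$ is a small resolution to kill $H^i(\mo_{\wt{D}})$ — both are equally valid instances of the same smallness-plus-affine-base argument.
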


\begin{proof}
Let $s \in H^0(\mc{L})$ be a section such that the zero locus $\wt{D}:=Z(s)$
is non-singular and intersects $E$ properly. Denote by $D:=\pi(\wt{D})$ the image of $\wt{D}$.
 Therefore, we have a short exact sequence of the form 
\begin{equation}\label{eq:zero-1}
0 \to \mc{L}^{\vee} \to \mo_{\wt{X}} \to \mo_{\wt{D}} \to 0
\end{equation}
Taking the associated cohomology long exact sequence, 
\[ ... \to H^0(\mo_{\wt{X}}) \to H^0(\mo_{\wt{D}}) \to H^1(\mc{L}^{\vee}) \to H^1(\mo_{\wt{X}})\, \mbox{ and for all } i>1,  H^{i-1}(\mo_{\wt{D}}) \to H^i(\mc{L}^{\vee}) \to H^i(\mo_{\wt{X}})\]
are exact. Remark \ref{rem:crep} implies that $H^i(\mo_{\wt{X}})=0$ for $i>0$.
Since $\mc{L}^{\vee}$ is almost full, Theorem \ref{thm:mck} implies that $H^i(\mo_{\wt{D}})=0$ for all $1 \le i <d-1$. 
Since $\wt{D}$ intersects $E$ properly, 
\[\pi_D: \wt{D} \to D\]
is a small resolution. This implies $H^{i}(\mo_{\wt{D}})=0$ for all $i \ge d-1$.

By Lemma \ref{lem:nor}, $D$ is a normal, Cohen-Macaulay variety. 
Since $D$ is normal, Zariski's main theorem implies that $H^0(\wt{D})=\mo_D$.
Therefore, the map from $H^0(\mo_{\wt{X}})$ to $H^0(\mo_{\wt{D}})$ is surjective. Using the long exact sequences above, this implies $H^i(\mc{L}^{\vee})=0$ for all $i>0$.
This proves the proposition.
\end{proof}

We now observe that by using this proposition repeatedly, we can prove similar vanishing result on higher tensor powers of $\mc{L}$.

\begin{defi}\label{defi:cm-deg}
    An invertible sheaf $\mc{L}$ on $\wt{X}$ is said to have \emph{CM-degree} $m$, if $m \le \dim(E)$ and 
    there exist $m$ global sections $s_1,...,s_m \in H^0(\mc{L})$
    such that  for all  $1 \le j \le m$, 
    \[\wt{D}(j):=Z(s_1) \cap Z(s_2) \cap ... \cap Z(s_j), \, \] is  irreducible, non-singular, of codimension $j$,  intersects $E$ properly and 
    the (reduced) image $D(j):=\pi(\wt{D}(j))$ 
    is a Cohen-Macaulay scheme.  Here $Z(s_j)$ denotes the zero locus of $s_j$. Note that, if $\mc{L}$ is of CM-degree $m$, then it is of 
    CM-degree $l$ for all $0 \le l \le m$.
\end{defi}

\begin{exa}
    In the Example \ref{exa:af}, $\mo_{\wt{X}}(\wt{D})$ has CM-degree equal to the dimension of the exceptional locus.
\end{exa}

\begin{thm}\label{thm:van-main}
    Let $\mc{L}$ be an invertible sheaf and $n:=\dim(E)$. Suppose that $\mc{L}$ has CM-degree $m$ for some $m \le n$.  
    Then, 
     $(\mc{L}^\vee)^{\otimes j}$ is almost full for all $1 \le j \le m$. In particular, 
    $H^i((\mc{L}^{\vee})^{\otimes j})=0$ for all $i>0$ and $1 \le j \le m$.   
\end{thm}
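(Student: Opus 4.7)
The strategy is a double induction: outer induction on the dimension $d$ of $X$ and inner induction on $j$. The base case $j=1$ is immediate: the hypothesis that $\mc{L}$ has CM-degree at least one provides a section $s_1 \in H^0(\mc{L})$ with non-singular zero locus $\wt{D}(1)$ meeting $E$ properly and Cohen-Macaulay image $D(1) = \pi(\wt{D}(1))$, so Lemma~\ref{lem:nor}(1) gives almost fullness of $\mc{L}^\vee$ and Proposition~\ref{prop:van} gives the cohomology vanishing. When $d = 3$, the small-resolution assumption forces $\dim E \le 1$, so $m = 1$, and this case exhausts the statement; the substantive content of the outer induction is thus the inductive step.

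For the inductive step, observe that $\pi|_{\wt{D}(1)} \colon \wt{D}(1) \to D(1)$ is a small resolution of the $(d-1)$-dimensional isolated Cohen-Macaulay singularity $D(1)$ (using Lemma~\ref{lem:nor}(1) for normality), and $\mc{L}|_{\wt{D}(1)}$ has CM-degree $m-1$ via $s_2|_{\wt{D}(1)},\ldots,s_m|_{\wt{D}(1)}$. By outer induction, $(\mc{L}^\vee|_{\wt{D}(1)})^{\otimes k}$ is almost full on $\wt{D}(1)$ with $H^i = 0$ for $i > 0$ and $1 \le k \le m - 1$. The core tool is the short exact sequence obtained by multiplying by $s_1$:
\[
0 \to (\mc{L}^\vee)^{\otimes j} \to (\mc{L}^\vee)^{\otimes(j-1)} \to (\mc{L}^\vee|_{\wt{D}(1)})^{\otimes(j-1)} \to 0.
\]
Applying the local cohomology functor $H^\ast_E(-)$ and combining the inner and outer hypotheses via Lemma~\ref{lem:mck01}, I expect to obtain $H^i_E((\mc{L}^\vee)^{\otimes j}) = 0$ for $1 \le i \le d-1$. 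The parallel analysis of the long exact sequence in global cohomology directly yields $H^i((\mc{L}^\vee)^{\otimes j}) = 0$ for $i \ge 2$, and together with the local cohomology vanishing, Lemma~\ref{lem:mck01} will supply almost fullness once the $i = 1$ case is in hand.

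The main obstacle is this $i=1$ case, which reduces to the surjectivity of the restriction map $H^0((\mc{L}^\vee)^{\otimes(j-1)}) \to H^0((\mc{L}^\vee|_{\wt{D}(1)})^{\otimes(j-1)})$. My plan is to push the short exact sequence down to $X$: the vanishings $R^i\pi_*((\mc{L}^\vee)^{\otimes(j-1)}) = 0$ and $R^i\pi_*((\mc{L}^\vee|_{\wt{D}(1)})^{\otimes(j-1)}) = 0$ for $i > 0$ (a consequence of the inductive hypotheses together with the fact that $X$ is local, so that stalks at $x$ of $R^i\pi_*$ are just the $H^i$ groups) turn the pushforward into a four-term exact sequence of coherent $\mo_X$-modules. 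Identifying the restriction map with the induced quotient between the maximal Cohen-Macaulay module $\pi_*(\mc{L}^\vee)^{\otimes(j-1)}$ and its restriction to the normal Cohen-Macaulay subvariety $D(1)$, I expect the surjectivity to follow from a Zariski-main-theorem argument analogous to the one used in the proof of Theorem~\ref{thm:tens}, exploiting the normality of $D(1)$ supplied by Lemma~\ref{lem:nor}(1). Once this surjectivity is established, the inductions close and Lemma~\ref{lem:mck01} yields almost fullness of $(\mc{L}^\vee)^{\otimes j}$ together with the claimed cohomology vanishing.
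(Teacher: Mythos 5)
Your double-induction skeleton is sensible, you correctly see that the long exact sequence of $0 \to (\mc{L}^\vee)^{\otimes j} \to (\mc{L}^\vee)^{\otimes(j-1)} \to (\mc{L}^\vee|_{\wt{D}(1)})^{\otimes(j-1)} \to 0$ together with the two inductive hypotheses and Lemma \ref{lem:mck01} kills $H^i_E((\mc{L}^\vee)^{\otimes j})$ for $1\le i\le d-1$ and $H^i((\mc{L}^\vee)^{\otimes j})$ for $i\ge 2$, and you correctly isolate the remaining issue as the surjectivity of $H^0((\mc{L}^\vee)^{\otimes(j-1)}) \to H^0((\mc{L}^\vee|_{\wt{D}(1)})^{\otimes(j-1)})$, equivalently $R^1\pi_*((\mc{L}^\vee)^{\otimes j})=0$. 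However, the claim that this surjectivity ``follows from a Zariski-main-theorem argument analogous to the one used in the proof of Theorem~\ref{thm:tens}'' is not a proof, and the analogy does not hold up under inspection. In Theorem \ref{thm:tens} Zariski's main theorem is applied to identify $\pi_*\mo_{\wt{D}}$ with $\mo_D$ for \emph{normal} $D$, i.e.\ for \emph{structure sheaves}, and the surjectivity of $H^0(\mo_{\wt{X}}) \to H^0(\mo_{\wt{D}_1\cup\wt{D}_2})$ is then extracted via the Mayer--Vietoris sequence $0\to\mo_{\wt{D}_1\cup\wt{D}_2}\to\mo_{\wt{D}_1}\oplus\mo_{\wt{D}_2}\to\mo_{\wt{D}_1.\wt{D}_2}\to 0$ and the fact that $D_1\cup D_2$ is a closed subscheme of $X$. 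Your map is between pushforwards of higher tensor powers of an ideal sheaf, not between structure sheaves, so Zariski's main theorem gives you no identification. Unwinding in the simplest case $j=2$, what you actually need is that $\I_{D_2|X}\to\I_{D_1.D_2|D_1}$ is surjective, i.e.\ $\I_{D_1|X}+\I_{D_2|X}=\I_{D_1.D_2|X}$, which is the assertion that the scheme-theoretic intersection $D_1\times_X D_2$ is reduced and equal to $D_1.D_2$. This is exactly the kind of statement that the Mayer--Vietoris argument in Theorem \ref{thm:tens} is engineered to produce, and it is not a consequence of your two inductive hypotheses by a depth count or a formal diagram chase (if you try the depth lemma on the four-term sequence you find it places no useful lower bound). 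In short: the nontrivial content of the theorem is concentrated precisely at the step you have waved off.

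The paper closes exactly this gap by proving and then iterating Theorem \ref{thm:tens}, whose proof contains the genuine input: it replaces the zero locus of a section of the tensor power by the union $\wt{D}_1\cup\wt{D}_2$, controls the structure sheaf of that union via the Mayer--Vietoris sequence, and then uses the normality (Lemma \ref{lem:nor}) of the three images $D_1$, $D_2$, $D_1.D_2$ to identify all the $H^0$ groups and force the surjection from $\mo_X$. To repair your argument, you should either invoke Theorem \ref{thm:tens} directly at the inductive step (checking carefully that its hypotheses, in particular the nonsingularity requirements on sections and on a general element of $|\mc{L}_1\otimes\mc{L}_2|$, are satisfied with your choices), or reproduce the Mayer--Vietoris computation for the unions $\wt{D}_1\cup\cdots\cup\wt{D}_j$ inside your induction; but as written, the ``analogous Zariski argument'' is the missing proof, not a routine check.
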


\begin{proof}
Since $\mc{L}$ has CM-degree $m$, there exists $m \le n$ global sections $s_1,...,s_m \in H^0(\mc{L})$
    such that for all $1 \le j \le m$, 
    \[\wt{D}(j):=Z(s_1) \cap Z(s_2) \cap ... \cap Z(s_j)\,\]  is  irreducible, non-singular of codimension $j$, intersects  $E$  properly
   and $D(j):=\pi(\wt{D}(j))$ 
    is a Cohen-Macaulay scheme. 
For each $1 \le j \le m$, denote by $\mc{L}_j:=\mc{L}|_{\wt{D}(j)}$ the restriction of $\mc{L}$ to $\wt{D}(j)$.
We have a short exact sequence of the form:
\begin{equation}\label{eq:recur-1}
    0 \to \mc{L}_j^{\vee} \xrightarrow{.s_{j+1}} \mo_{\wt{D}(j)} \to \mo_{\wt{D}(j+1)} \to 0
\end{equation}
Since $D(j)$ is a Cohen-Macaulay scheme of dimension at least $2$, with isolated singularities, Serre's criterion implies that $D(j)$ is normal.
Then, Zariski's main theorem implies  that 
$\pi_*\mo_{\wt{D}(j)}=\mo_{D(j)}$. Since $D(j+1) \subset D(j)$, we get the following short exact sequence  by applying $\pi_*$ to \eqref{eq:recur-1} 
\[0 \to \pi_*\left(\mc{L}_j^{\vee}\right) \to \mo_{D(j)} \to \mo_{D(j+1)} \to 0.\]
By depth comparison in exact sequence, $\pi_*(\mc{L}_j^{\vee})$ is a Cohen-Macaulay $\mo_{D(j)}$-module i.e., $\mc{L}_j^{\vee}$ is almost 
full as an $\mo_{\wt{D}(j)}$-module. 
We now show that $(\mc{L}^{\vee})^{\otimes j}$ is almost full for all $1 \le j \le m$.
Indeed, since $\mc{L}_{j-2}^{\vee}$ and $\mc{L}^{\vee}_{j-1}=\mc{L}^{\vee}_{j-2}|_{Z(s_{j-1})}$ are almost full, Theorem \ref{thm:tens} implies that 
$(\mc{L}_{j-2}^{\vee})^{\otimes 2}$ is almost full.
Recursively, if $(\mc{L}_{j-i}^{\vee})^{\otimes i}=(\mc{L}_{j-i-1}^{\vee})^{\otimes i}|_{Z(s_{j-i})}$ is almost full, then 
Theorem \ref{thm:tens} implies $(\mc{L}_{j-i-1}^\vee)^{\otimes i+1}$ is almost full. Varying $i$ from $1$ to $j$, we conclude that 
$(\mc{L}^{\vee})^{\otimes j}$ is almost full. This proves the claim and hence the first part of the theorem.
   The second part of the theorem follows immediately from Proposition \ref{prop:van}. This proves the theorem.
\end{proof}

\begin{cor}\label{cor:tb}
    Let $\mc{L}$ be an invertible sheaf of CM-degree $m$. Then, 
    \[\mc{E}_l:=\mo_{\wt{X}} \oplus \bigoplus\limits_{i=1}^l \mc{L}^{\otimes i}\] is a tilting 
    bundle for all $1 \le l \le m$. Moreover, if $X$ is Gorenstein, then $\bigoplus\limits_{i=-m}^m (\pi_*\mc{L}^{\otimes i})$ is a maximal Cohen-Macaulay $\mo_X$-module 
    admitting a tilting factorization with tilting factor $\pi_*\mc{E}_m$.
\end{cor}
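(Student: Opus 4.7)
The argument splits into two parts corresponding to the two claims in the statement, and my plan is to reduce the tilting claim to cohomological vanishings on $\wt{X}$ and then invoke Theorem \ref{thm:tf}(1) for the factorization claim.

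For the tilting claim, I would first compute
\[
\Ext^i_{\wt{X}}(\mc{E}_l, \mc{E}_l) \cong H^i(\mc{E}_l \otimes \mc{E}_l^\vee) \cong \bigoplus_{a, b = 0}^l H^i(\mc{L}^{\otimes(b-a)}),
\]
reducing the problem to showing $H^i(\mc{L}^{\otimes k}) = 0$ for all $i > 0$ and all $k$ with $-l \le k \le l$. The case $k = 0$ is Remark \ref{rem:crep} combined with affineness of $X$, while the negative range $-l \le k \le -1$ is delivered directly by Theorem \ref{thm:van-main}. For the positive range $1 \le k \le l$, I plan to apply the duality isomorphism of Theorem \ref{thm:locdual},
\[
H^i_E(\mc{L}^{\otimes k}) \cong H^{d-i}((\mc{L}^\vee)^{\otimes k} \otimes K_{\wt{X}})^{\vee},
\]
together with the vanishings $H^j((\mc{L}^\vee)^{\otimes k}) = 0$ just obtained, to force $H^i_E(\mc{L}^{\otimes k}) = 0$ for $0 < i < d$. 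The local cohomology long exact sequence of Proposition \ref{prop:loc} would then identify $H^i(\mc{L}^{\otimes k})$ with cohomology on $V = \wt{X} \setminus E \cong U$, and Lemma \ref{lem:dual} applied to the almost-full sheaf $(\mc{L}^\vee)^{\otimes k}$ from Theorem \ref{thm:van-main} shows that $\pi_*\mc{L}^{\otimes k}$ is maximal Cohen-Macaulay, forcing these cohomologies to vanish in the range $1 \le i \le d - 2$. Smallness of $\pi$ gives $\dim E \le d-2$, so $R^i\pi_* = 0$ for $i > d - 2$, which handles the remaining cases $i = d-1, d$.

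For the moreover statement, I would set $l = m$ so that $\mc{E}_m$ is a tilting bundle on $\wt{X}$ by the first part. Theorem \ref{thm:tf}(1) applied to $\mc{E}_m$ then gives immediately that $\pi_*(\mc{E}_m \otimes \mc{E}_m^\vee)$ is a maximal Cohen-Macaulay $\mo_X$-module with tilting factor $\pi_*\mc{E}_m$. Decomposing
\[
\pi_*(\mc{E}_m \otimes \mc{E}_m^\vee) \cong \bigoplus_{k=-m}^m \bigl(\pi_*\mc{L}^{\otimes k}\bigr)^{\oplus (m + 1 - |k|)},
\]
the sheaf $\bigoplus_{k=-m}^m \pi_*\mc{L}^{\otimes k}$ appears as a direct summand and so is itself maximal Cohen-Macaulay, with the tilting factorization still witnessed by $\pi_*\mc{E}_m$.

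The main obstacle is the positive-power vanishing in the first part, particularly at $k = m$: a direct inductive attempt using the short exact sequence
\[
0 \to \mc{L}^{\otimes k-1} \xrightarrow{s_1} \mc{L}^{\otimes k} \to \mc{L}^{\otimes k}|_{\wt{D}(1)} \to 0
\]
and the fact that $\mc{L}|_{\wt{D}(1)}$ inherits CM-degree only $m - 1$ falls short by one at the top power. Routing instead through Theorem \ref{thm:locdual} to convert the already-established negative-power vanishings from Theorem \ref{thm:van-main} into vanishings of the local cohomology groups $H^i_E(\mc{L}^{\otimes k})$ is what lets the argument reach all the way up to $k = m$.
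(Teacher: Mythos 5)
Your route to the positive-power vanishing $H^i(\mc{L}^{\otimes k})=0$ ($1\le k\le l$) is genuinely different from the paper's: the paper simply cites Toda--Uehara's Lemma 2.1 to convert the negative-power vanishing into the positive-power one, whereas you re-derive this implication in-house via the local duality isomorphism of Theorem \ref{thm:locdual} combined with Lemma \ref{lem:dual}. The trouble is that both halves of your replacement argument silently impose a Gorenstein hypothesis. First, Theorem \ref{thm:locdual} gives $H^i_E(\mc{L}^{\otimes k}) \cong H^{d-i}((\mc{L}^\vee)^{\otimes k}\otimes K_{\wt{X}})^{\vee}$, and to conclude this vanishes you need $H^{d-i}((\mc{L}^\vee)^{\otimes k}\otimes K_{\wt{X}})=0$; but Theorem \ref{thm:van-main} only gives you $H^{j}((\mc{L}^\vee)^{\otimes k})=0$, so your deduction only goes through if $K_{\wt{X}}\cong\mo_{\wt{X}}$ (i.e.\ the resolution is crepant and $X$ is Gorenstein). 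Second, Lemma \ref{lem:dual} -- which you invoke to conclude that $\pi_*\mc{L}^{\otimes k}$ is maximal Cohen--Macaulay -- is stated and proved only under the standing assumption that $X$ is Gorenstein (its proof crucially uses that duals of MCM modules are MCM, a Gorenstein-specific fact).

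This matters because the first assertion of Corollary \ref{cor:tb} is made in the general Cohen--Macaulay setup of \S\ref{se01}, not the Gorenstein one; indeed the paper explicitly observes right after the proof that ``the first part of Corollary \ref{cor:tb} does not require $X$ to be Gorenstein.'' Your argument therefore proves less than the corollary claims: you establish the tilting statement only when $X$ is Gorenstein, which in this paper is a genuine restriction since the corollary is designed to cover isolated Cohen--Macaulay singularities more generally (compare the paragraph before the corollary and Theorem \ref{thm:van-main}, neither of which needs Gorensteinness). Your treatment of the ``moreover'' clause -- where the Gorenstein hypothesis is in force -- is fine and matches the paper's argument via Theorem \ref{thm:tf}(1), and your diagnosis of why a naive induction on the CM-degree would lose one power at the top is a sensible observation; but the workaround you chose trades one shortfall for another by pulling in Gorenstein-only tools.
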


\begin{proof}
 Since $\mc{L}$ is of CM-degree $m$,  Theorem \ref{thm:van-main} implies that $(\mc{L}^{\vee})^{\otimes j}$ is almost full  and 
 \[H^i((\mc{L}^{\vee})^{\otimes j})=0\, \mbox{ for all }\, i>0\, \mbox{ and } 1 \le j \le m.\] 
    Then, \cite[Lemma $2.1$]{toda2} implies that 
   $H^i(\mc{L}^{\otimes j})=0$ for all  $i>0$, $1 \le j \le m$.
 This implies 
 \[H^i(\Hc_{\wt{X}}(\mc{E}_l,\mc{E}_l))=\bigoplus_{j=-l}^{l} H^i(\mc{L}^{\otimes j})=0\, \mbox{ for }\, i>0.\]
 Therefore, $\mc{E}_l$ is a tilting bundle for all $1 \le l \le m$. This proves the first part of the corollary.

 Suppose now that $X$ is Gorenstein. Theorem \ref{thm:van-main} along with Lemma \ref{lem:dual} also implies that $\pi_*\mc{L}^{\otimes i}$ is maximal Cohen-Macaulay 
 for all $-m \le i \le m$. Therefore,  $\bigoplus\limits_{i=-m}^m (\pi_*\mc{L}^{\otimes i})$ is a maximal Cohen-Macaulay $\mo_X$-module and  
 \[ \bigoplus\limits_{i=-m}^m (\pi_*\mc{L}^{\otimes i}) \cong \mr{Hom}_X(\pi_*\mc{E}_m,\pi_*\mc{E}_m). \]
 Hence,  $\bigoplus\limits_{i=-m}^m (\pi_*\mc{L}^{\otimes i})$ admits a tilting factorization with tilting factor $\pi_*\mc{E}_m$.
 This proves the corollary.
\end{proof}

Observe that the first part of Corollary \ref{cor:tb} does not require $X$ to be Gorenstein. 
We now prove the converse of Corollary \ref{cor:tb} in the Gorenstein setup.

\begin{cor}\label{lem:tilt-2}
Suppose that $X$ is Gorenstein. 
Let $\mc{L}$ be a very ample invertible $\mo_{\wt{X}}$-module and $n:=\dim(E)$.
 If    \[\mc{E}:= \mo_{\wt{X}} \oplus  \bigoplus\limits_{i=1}^{n} \mc{L}^{\otimes j}\] is a tilting bundle, then 
 $\mc{L}$ is almost full of CM-degree $n$.
\end{cor}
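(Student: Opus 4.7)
My plan is to convert the tilting hypothesis into exactly the cohomological and Cohen-Macaulay data demanded by Definition \ref{defi:cm-deg}, and then apply Bertini inductively. Since $\mc{E}$ is tilting and $\Hc_{\wt{X}}(\mc{E},\mc{E})\cong\bigoplus_{-n\le k\le n}\mc{L}^{\otimes k}$, the tilting assumption gives $H^i(\mc{L}^{\otimes k})=0$ for every $i>0$ and $-n\le k\le n$. Corollary \ref{cor:tb-2} then says $\pi_*(\mc{L}^{\otimes k})$ is maximal Cohen-Macaulay for every $-n\le k\le n$, which immediately yields the first claim ($\mc{L}$ is almost full) and also provides the base of the induction: $(\mc{L}^\vee)^{\otimes k}$ is almost full on $\wt{X}\to X$ for $1\le k\le n$.

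Next I construct sections $s_1,\dots,s_n\in H^0(\wt{X},\mc{L})$ iteratively. The inductive hypothesis at step $j$ is that $\wt{D}(l)$ is smooth, irreducible, of codimension $l$, meets $E$ properly, and $D(l)=\pi(\wt{D}(l))$ is Cohen-Macaulay for $0\le l\le j-1$, and moreover that $(\mc{L}^\vee)^{\otimes k}|_{\wt{D}(j-1)}$ is almost full on the induced small resolution $\pi|_{\wt{D}(j-1)}:\wt{D}(j-1)\to D(j-1)$ for all $1\le k\le n-j+1$. Because $\mc{L}$ is very ample on $\wt{X}$, its restriction to $\wt{D}(j-1)$ is generated by restrictions of global sections, so Bertini yields a general $s_j\in H^0(\wt{X},\mc{L})$ whose zero locus meets $\wt{D}(j-1)$ transversely in a smooth, irreducible subscheme $\wt{D}(j)$ of codimension $j$ in $\wt{X}$ intersecting $E$ properly. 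Applying Lemma \ref{lem:nor}(1) to $\pi|_{\wt{D}(j-1)}$ with the invertible sheaf $\mc{L}|_{\wt{D}(j-1)}$, whose dual is almost full by the $k=1$ case of the hypothesis, then gives $D(j)=\pi(\wt{D}(j))$ Cohen-Macaulay.

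To close the induction I must promote the almost-fullness hypothesis from $\wt{D}(j-1)$ to $\wt{D}(j)$. Iterating the short exact sequence
\[0\to\mc{L}^{\otimes(l-1)}|_{\wt{D}(i-1)}\to\mc{L}^{\otimes l}|_{\wt{D}(i-1)}\to\mc{L}^{\otimes l}|_{\wt{D}(i)}\to 0\]
for $i=1,\dots,j$ and taking cohomology yields, by a telescoping argument from the tilting vanishings on $\wt{X}$, that $H^i(\mc{L}^{\otimes l}|_{\wt{D}(j)})=0$ for all $i\ge 1$ and $-n+j\le l\le n$. Then, for each $1\le k\le n-j$, the sequence
\[0\to(\mc{L}^\vee)^{\otimes(k+1)}|_{\wt{D}(j-1)}\to(\mc{L}^\vee)^{\otimes k}|_{\wt{D}(j-1)}\to(\mc{L}^\vee)^{\otimes k}|_{\wt{D}(j)}\to 0\]
pushes forward to an honest short exact sequence on $X$, since $R^1\pi_*((\mc{L}^\vee)^{\otimes(k+1)}|_{\wt{D}(j-1)})=0$ (equivalent on the affine base $X$ to the vanishing $H^1((\mc{L}^\vee)^{\otimes(k+1)}|_{\wt{D}(j-1)})=0$ just established). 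Because the two outer push-forwards are MCM by the inductive hypothesis, depth comparison forces the third to be MCM on $D(j)$, that is, $(\mc{L}^\vee)^{\otimes k}|_{\wt{D}(j)}$ is almost full for $1\le k\le n-j$, closing the induction.

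The main obstacle is the bookkeeping: the tilting range $-n\le k\le n$ and the propagated almost-fullness range $1\le k\le n-j$ must telescope so that each step $j-1\to j$ closes for every $j\le n$. The key numerical check is that $k+1\le n-j+1$ throughout the inductive step, which is precisely what guarantees the $R^1\pi_*$-term vanishes and the push-forward stays short exact; iterating for $j=1,\dots,n$ produces the $n$ sections satisfying Definition \ref{defi:cm-deg}.
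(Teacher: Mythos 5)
Your proof is correct and takes essentially the same route as the paper's: both arguments start from Corollary~\ref{cor:tb-2} to get the almost-full family $(\mc{L}^{\vee})^{\otimes k}$, then propagate almost-fullness down the chain $\wt{D}(0)\supset\wt{D}(1)\supset\cdots$ by pushing forward the restriction short exact sequences and invoking depth comparison to conclude each $D(j)$ is Cohen--Macaulay. The two cosmetic differences are that you select the sections $s_j$ iteratively via Bertini (rather than all at once from very ampleness) and that you justify the $H^1$-vanishing needed for exactness of the pushforward by telescoping the tilting vanishings $H^i(\mc{L}^{\otimes k})=0$, $-n\le k\le n$, across the chain, whereas the paper invokes Proposition~\ref{prop:van} at each stage; these are interchangeable, and your numerical bookkeeping ($k+1\le n-j+1$ at step $j$) correctly matches the range $0\le j-i\le n-i$ implicit in the paper's recursion.
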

    
    \begin{proof}
        Since $\mc{L}$ is very ample, there exist $n$ global sections $s_1, s_2, ..., s_n$ such that 
        \[\wt{D}(j):=Z(s_1) \cap Z(s_2) \cap ... \cap Z(s_j), \, \mbox{ for all } 1 \le j \le n\] is  irreducible, non-singular, of codimension $j$,  intersects $E$ properly. It remains to show that $D(j):=\pi(\wt{D}(j))$ is a Cohen-Macaulay scheme. 
        By Corollary \ref{cor:tb-2}, $(\mc{L}^\vee)^{\otimes j}$ is almost full for all $1 \le j \le n$. 
        Denote by $\mc{L}_j:=\mc{L}|_{\wt{D}(j)}$. Then, we have the short exact sequence \eqref{eq:recur-1}.
        This gives rise to the following short exact sequences for all $1 \le j \le n$:
        \begin{align}
            & 0 \to (\mc{L}^{\vee})^{\otimes j} \to (\mc{L}^{\vee})^{\otimes j-1} \to (\mc{L}_1^{\vee})^{\otimes j-1} \to 0\label{eq:recur-3} \\
            & 0 \to (\mc{L}_i^{\vee})^{\otimes j-i} \to (\mc{L}_i^{\vee})^{\otimes j-i-1} \to (\mc{L}^{\vee}_{i+1})^{\otimes j-i-1} \to 0\, \, \mbox{ for all } 0 \le i \le j \le n. \label{eq:recur-4}
        \end{align}
        Proposition \ref{prop:van} implies that we have a short exact sequence (using the vanishing of $H^1)$:
        \[0 \to \pi_*((\mc{L}^{\vee})^{\otimes j}) \to \pi_*((\mc{L}^{\vee})^{\otimes j-1}) \to \pi_*((\mc{L}_1^{\vee})^{\otimes j-1}) \to 0.\]
        By depth comparison in short exact sequences, this implies $\pi_*((\mc{L}_1^{\vee})^{\otimes j-1})$ is Cohen-Macaulay i.e.,
        $(\mc{L}_1^{\vee})^{\otimes j-1}$ is almost full as an $\mo_{\wt{D}^{(1)}}$-module for all $1 \le j \le n$.
        In particular taking $j=1$, this implies $\mo_{D(1)}$ is Cohen-Macaulay.
        Recursively 
        if for some fixed $1 \le i < n$, $(\mc{L}_i^{\vee})^{\otimes j-i}$ is almost full for all $i \le j \le n$, 
        \eqref{eq:recur-4} implies that we have the short exact sequence (use the vanishing of $H^1$ as shown in Proposition \ref{prop:van}):
         \[0 \to \pi_*((\mc{L}_i^{\vee})^{\otimes j-i}) \to \pi_*((\mc{L}_i^{\vee})^{\otimes j-i-1}) \to \pi_*((\mc{L}_{i+1}^{\vee})^{\otimes j-i-1}) \to 0,\, \, \mbox{ for all } i+1 \le j \le n.\]
        By depth comparison in short exact sequences, this implies 
        $(\mc{L}_{i+1}^{\vee})^{\otimes j-i-1}$ is almost full for all $i+1 \le j \le n$.
        In particular taking $j=i+1$, this implies $\mo_{D(i+1)}$ is Cohen-Macaulay. Hence, for all $1 \le j \le n$, $D(j)$ is a 
        Cohen-Macaulay scheme. This proves the corollary.
    \end{proof}

\section{Application to the Bondal-Orlov conjecture}

\subsection{Setup}\label{se02}
Throughout this section we assume that $(X,x)$ is as in \S \ref{se01} with the additional assumption that $X$ is an isolated Gorenstein singularity and 
\[\pi^+:Y^+ \to X,\, \, \pi^-:Y^- \to X\]
be two small resolutions of $X$ related by a flop i.e., there exists a very ample invertible sheaf $\mc{L}^+$ on $Y^+$
such that the strict transform $\mc{L}^-$ of $\mc{L}^+$ in $Y^-$ is anti-ample i.e., the dual $(\mc{L}^-)^{\vee}$ is very ample. 
Denote by $E^+$ (resp. $E^-$) the exceptional loci of the small resolutions $\pi^+$ (resp. $\pi^-$).
Denote by $n^+:=\dim(E^+)$ and $n^-:=\dim(E^-)$.

\subsection{Tilting bundle and generator}
Setup as in \S \ref{se02}. Let $\mc{E}$ be a locally free sheaf on $Y^+$. Recall, that $\mc{E}$ is called \emph{generator of} $D^b(Y^+)$
if \[\mb{R}\mr{Hom}_{Y^+}(\mc{E}, \mc{K})=0\, \mbox{ for some } \mc{K} \in D^b(Y^+) \Rightarrow \, \mc{K}=0. \]
The same definition holds for a locally-free generator on $Y^-$.
Recall, the following useful result of Van den Bergh \cite{van1}:

\begin{lem}\label{lem:berg}
    Let $n^+=\dim(E^+)$ and $n^-:=\dim(E^-)$. Then, 
    \[\mc{E}^+:= \bigoplus_{i=0}^{n^+} (\mc{L}^+)^{\otimes i}\, \mbox{ and } (\mc{E}^-)^{\vee}:=\bigoplus_{i=0}^{n^-} (\mc{L}^-)^{\otimes -i}\]
    are generators of $D^b(Y^+)$ and $D^b(Y^-)$, respectively (here $(-)^{\otimes -i}$ means $((-)^{\vee})^{\otimes i}$). 
\end{lem}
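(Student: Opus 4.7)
The plan is to focus on the first assertion that $\mc{E}^+$ generates $D^b(Y^+)$; the claim for $(\mc{E}^-)^{\vee}$ on $D^b(Y^-)$ follows by the identical argument applied to the very ample sheaf $(\mc{L}^-)^{\vee}$ (which is very ample precisely because $\mc{L}^-$ is anti-ample) and to the integer $n^-$.

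First I would use the affineness of $X = \Spec(A)$ to reduce the generator condition. For any $\mc{K} \in D^b(Y^+)$ one has
\[
R\mr{Hom}_{Y^+}(\mc{E}^+, \mc{K}) \cong \bigoplus_{i=0}^{n^+} R\Gamma\bigl(X,\, R\pi^+_*(\mc{K} \otimes (\mc{L}^+)^{\otimes -i})\bigr),
\]
and $R\Gamma(X, -)$ is faithful on quasi-coherent complexes since $X$ is affine. Thus $R\mr{Hom}_{Y^+}(\mc{E}^+, \mc{K}) = 0$ is equivalent to $R\pi^+_*(\mc{K} \otimes (\mc{L}^+)^{\otimes -i}) = 0$ in $D^b(X)$ for every $0 \le i \le n^+$. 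Because $\pi^+$ is an isomorphism over $U = X \setminus \{x\}$, restriction to $(\pi^+)^{-1}(U)$ already forces $\mc{K}|_{(\pi^+)^{-1}(U)} = 0$, so the cohomology of $\mc{K}$ is supported on the exceptional fiber $E^+$.

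The heart of the proof is the standard relative-generator principle: for a projective morphism $\pi^+ : Y^+ \to X$ with a $\pi^+$-very ample sheaf $\mc{L}^+$ whose fibers have dimension at most $n^+$, the collection $\{(\mc{L}^+)^{\otimes -i}\}_{0 \le i \le n^+}$ is a relative generator of $D^b(Y^+)$ over $X$. The cleanest way to establish this is to use $\mc{L}^+$ to embed $Y^+ \hookrightarrow \mathbb{P}^N_X$ and restrict the relative Beilinson resolution of the diagonal $\Delta \subset \mathbb{P}^N_X \times_X \mathbb{P}^N_X$ to $Y^+ \times_X Y^+$. Viewing the restriction as a Fourier--Mukai kernel, one recovers the identity functor on $D^b(Y^+)$ as a convergent complex whose terms are $L(\pi^+)^* R\pi^+_*(- \otimes (\mc{L}^+)^{\otimes -i})$ for $0 \le i \le n^+$; the upper bound $n^+$ arises precisely because higher direct images beyond the relative dimension vanish, truncating the Beilinson complex. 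Feeding $\mc{K}$ into this resolution and invoking the vanishing obtained in the previous paragraph forces $\mc{K} = 0$.

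The step I expect to be the main obstacle is making the Beilinson/diagonal-resolution argument fully rigorous in the present local Noetherian, nonsmooth setting: $X$ is the spectrum of a local ring (so not projective), and $E^+$ need not be smooth, irreducible, or reduced. These issues are exactly what Van den Bergh handles in \cite{van1}, whose argument goes through in our setup after verifying base-change compatibility of the Beilinson resolution and the vanishing of $R^i \pi^+_*$ beyond $i = n^+$. An alternative route that avoids the diagonal altogether is to reduce by induction on the amplitude of $\mc{K}$ to a single coherent sheaf $\mc{F}$, and then to deduce $\mc{F} = 0$ from the vanishing of the $n^+ + 1$ pushforwards via Castelnuovo--Mumford regularity applied to the fibers of $\pi^+$, embedded via $\mc{L}^+$ into projective space of the appropriate dimension.
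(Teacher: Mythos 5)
The paper gives no proof of this lemma: it simply cites \cite[Lemma~3.2.2]{van1}, whose argument is a Koszul complex computation. Your initial reduction is correct and matches Van den Bergh's: affineness of $X$ converts the generator condition into the vanishing of $R\pi^+_*\bigl(\mc{K}\otimes(\mc{L}^+)^{\otimes -i}\bigr)$ for $0\le i\le n^+$, and the isomorphism over $U$ forces $\mc{K}$ to be supported on $E^+$. But the central step of your sketch has a gap.

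You propose to embed $Y^+\hookrightarrow\mb{P}^N_X$ via $\mc{L}^+$ and restrict the relative Beilinson resolution of the diagonal, claiming that ``the upper bound $n^+$ arises precisely because higher direct images beyond the relative dimension vanish, truncating the Beilinson complex.'' This conflates two independent quantities. The Beilinson resolution of $\Delta\subset\mb{P}^N_X\times_X\mb{P}^N_X$ has $N+1$ terms $\mo(-i)\boxtimes\Omega^i(i)$ for $0\le i\le N$, and $N$ is typically far larger than $n^+$; the tensor powers of $\mc{L}^+$ that appear are indexed by the ambient projective space, not by the fibre dimension. The bound $n^+$ on the \emph{cohomological degree} of $R^j\pi^+_*$ does nothing to shorten the \emph{length} of the Beilinson complex in the $\boxtimes$-direction. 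Consequently, the vanishing of $R\Gamma(\mc{K}\otimes(\mc{L}^+)^{\otimes -i})$ for $0\le i\le n^+$ alone does not force the Fourier--Mukai transform of $\mc{K}$ by the restricted Beilinson kernel to vanish, since the terms with $n^+<i\le N$ remain in play.

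What actually produces the bound $n^+$ in \cite[Lemma~3.2.2]{van1} is a \emph{Koszul} complex, not a diagonal resolution. Because $\mc{L}^+$ is very ample (in particular globally generated) and the fibres of $\pi^+$ have dimension at most $n^+$, one can choose $n^++1$ global sections of $\mc{L}^+$ with no common zero near $E^+$. The associated Koszul complex is then exact on a neighbourhood of the support of $\mc{K}$ and has exactly $n^++2$ terms, involving the powers $(\mc{L}^+)^{-j}$ for $0\le j\le n^++1$. Tensoring with $\mc{K}$ and appropriate twists, the hypothesis $R\Gamma(\mc{K}\otimes(\mc{L}^+)^{\otimes -j})=0$ for $0\le j\le n^+$ propagates by induction to all $j\ge 0$, and ampleness of $\mc{L}^+$ on the fibre then forces $\mc{K}=0$. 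Your alternative suggestion via induction on amplitude and Castelnuovo--Mumford regularity on the fibres is closer in spirit to this, but as stated it is too vague to constitute a proof. I would replace the Beilinson paragraph with the Koszul argument (or simply cite \cite[Lemma~3.2.2]{van1} as the paper does).
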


\begin{proof}
    See \cite[Lemma $3.2.2$]{van1} for a proof.
\end{proof}

Recall, that if $\mc{E}^+$ (resp. $\mc{E}^-$) in Lemma \ref{lem:berg} is also a tilting locally-free sheaf, then there 
are natural equivalences of derived categories:
\[\mb{R}\mr{Hom}_{Y^+}(\mc{E}^+,-):D^b(Y^+) \xrightarrow{\sim} D^b(\mr{End}_{Y^+}(\mc{E}^+)) \mbox{ and }\]\[ \mb{R}\mr{Hom}_{Y^-}((\mc{E}^-)^{\vee},-):D^b(Y^-) \xrightarrow{\sim} D^b(\mr{End}_{Y^-}((\mc{E}^-)^{\vee}))\]
Note that, $\mc{E}^+$ and $(\mc{E}^-)^{\vee}$ are not always tilting bundles. We prove that if $\mc{L}^+$ is of CM-degree $n^+$, this holds true:

\begin{thm}\label{thm:orlov}
    Suppose that $\mc{L}^+$ has CM-degree $n^+$. If $n^+=n^-$, then $\mc{E}^+$ and $(\mc{E}^-)^{\vee}$ as in Lemma \ref{lem:berg} 
    are tilting generators. In particular, 
    we have an equivalence of derived categories:
    \[D^b(Y^+) \cong D^b(Y^-).\]
\end{thm}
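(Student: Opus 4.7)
The plan is to apply Corollary \ref{cor:tb} on each side separately, after verifying a CM-degree hypothesis on $Y^-$, and then to obtain the derived equivalence from a comparison of endomorphism algebras via reflexive pushforward to $X$. On $Y^+$, the CM-degree $n^+$ of $\mc{L}^+$ lets Corollary \ref{cor:tb} conclude that $\mc{E}^+$ is a tilting bundle, and Lemma \ref{lem:berg} upgrades this to a tilting generator of $D^b(Y^+)$.

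The main step is to show that $(\mc{L}^-)^{\vee}$ has CM-degree $n^-$ on $Y^-$. The key observation is reflexive agreement across the flop: since $\pi^{\pm}$ are isomorphisms on $U := X \setminus \{x\}$ and the complement has codimension at least two, the reflexive pushforwards $\pi^+_*(((\mc{L}^+)^{\vee})^{\otimes j})$ and $\pi^-_*(((\mc{L}^-)^{\vee})^{\otimes j})$ on $X$ extend the same invertible sheaf on $U$ and are therefore isomorphic as $\mo_X$-modules. Theorem \ref{thm:van-main} applied to $\mc{L}^+$ gives that $((\mc{L}^+)^{\vee})^{\otimes j}$ is almost full for $1 \leq j \leq n^+$, and reflexive agreement transfers this to almost-fullness of $((\mc{L}^-)^{\vee})^{\otimes j}$ on $Y^-$. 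By Lemma \ref{lem:dual}, $\mc{L}^-$ is then also almost full on $Y^-$. To upgrade almost-fullness to CM-degree, I choose generic sections $t_1, \ldots, t_{n^-}$ of the relatively very ample bundle $(\mc{L}^-)^{\vee}$; Bertini on $Y^-$ provides irreducibility, smoothness, the correct codimension, and proper intersection with $E^-$ at each stage. Lemma \ref{lem:nor} together with almost-fullness of $\mc{L}^-$ yields Cohen--Macaulayness of $\pi^-(Z(t_1))$, and Lemma \ref{lem:dual} gives that $(\mc{L}^-)^{\vee}|_{Z(t_1)}$ is almost full on $Z(t_1)$. Iterating on the smooth cut-down subvarieties $Z(t_1) \cap \cdots \cap Z(t_j)$, which remain small resolutions of their images in $X$, produces CM-degree $n^-$ of $(\mc{L}^-)^{\vee}$.

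With $(\mc{L}^-)^{\vee}$ of CM-degree $n^-$, Corollary \ref{cor:tb} gives that $(\mc{E}^-)^{\vee}$ is a tilting bundle on $Y^-$, which together with Lemma \ref{lem:berg} makes it a tilting generator. For the derived equivalence, I use that $(\mc{E}^+)^{\vee}$ is also a tilting generator on $Y^+$ (dualising preserves both $\mr{Ext}$-vanishing and, for smooth varieties, classical generation). Reflexive agreement applied degreewise yields $\pi^+_*((\mc{E}^+)^{\vee}) \cong \pi^-_*((\mc{E}^-)^{\vee})$ as $\mo_X$-modules, and hence $\mr{End}_{Y^+}((\mc{E}^+)^{\vee}) \cong \mr{End}_{Y^-}((\mc{E}^-)^{\vee})$ as algebras. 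The standard tilting equivalences then give $D^b(Y^+) \cong D^b(\mr{End}_{Y^+}((\mc{E}^+)^{\vee})) \cong D^b(\mr{End}_{Y^-}((\mc{E}^-)^{\vee})) \cong D^b(Y^-)$.

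The principal obstacle is the inductive construction in the second step, where at each stage one must simultaneously ensure that (i) Bertini on $Y^-$ provides sections with smooth zero loci intersecting $E^-$ properly, (ii) the restriction of $\pi^-$ to each cut-down subvariety remains a small resolution onto its image so that Lemma \ref{lem:nor} applies, and (iii) the almost-fullness transfers well under restriction so that Lemma \ref{lem:dual} can be iterated.
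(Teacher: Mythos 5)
Your overall plan is sound up to the point where you try to establish that $(\mc{L}^-)^{\vee}$ has CM-degree $n^-$ on $Y^-$, but the iteration you sketch there has a genuine gap. After the first cut-down you hold: $D(1)=\pi^-(Z(t_1))$ is Cohen--Macaulay, and (from Lemma \ref{lem:dual} applied on $Y^-\to X$) the sheaf $(\mc{L}^-)^{\vee}|_{\wt{D}(1)}$ is almost full on $\wt{D}(1)$. To conclude that $D(2)=\pi^-(\wt{D}(2))$ is Cohen--Macaulay, you would apply Lemma \ref{lem:nor}(1) on the small resolution $\wt{D}(1)\to D(1)$ with $\mc{L}=(\mc{L}^-)^{\vee}|_{\wt{D}(1)}$; but Lemma \ref{lem:nor}(1) demands almost-fullness of $\mc{L}^{\vee}=\mc{L}^-|_{\wt{D}(1)}$, which is the \emph{dual} of what your iteration produced. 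Passing from $(\mc{L}^-)^{\vee}|_{\wt{D}(1)}$ almost full to $\mc{L}^-|_{\wt{D}(1)}$ almost full via Lemma \ref{lem:dual} requires the base $D(1)$ to be Gorenstein, and Lemma \ref{lem:nor} only delivers Cohen--Macaulayness; $D(1)$ is in general not a Cartier divisor in $X$, so Gorenstein-ness cannot be assumed. So the single-line-bundle induction you describe does not close: what comes out at stage $j$ is not in the form needed to feed stage $j+1$.

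The gap is fixable if you retain \emph{all} tensor powers, exactly as in the proof of Corollary \ref{lem:tilt-2}: use the transferred almost-fullness of $((\mc{L}^-)^{\vee})^{\otimes j}$ for all $1\le j\le n^-$ (hence of $(\mc{L}^-)^{\otimes j}$ by Lemma \ref{lem:dual} on $X$) and restrict successively via the short exact sequences $0\to(\mc{L}^-)^{\otimes j}\to(\mc{L}^-)^{\otimes j-1}\to(\mc{L}^-|_{\wt{D}(1)})^{\otimes j-1}\to 0$, pushing forward (using $H^1=0$ from Proposition \ref{prop:van}) and applying depth comparison at each step; only this gives the almost-fullness of $\mc{L}^-|_{\wt{D}(i)}$ on $\wt{D}(i)$ needed at the next stage, with no Gorenstein-ness of the intermediate $D(i)$ required. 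But the paper avoids this entirely: it never re-establishes CM-degree on $Y^-$. Instead, it transfers almost-fullness of $((\mc{L}^+)^{\vee})^{\otimes j}$ across the flop (the pushforwards to $X$ coincide since both $\pi^\pm$ are small), invokes Lemma \ref{lem:dual} once on $X$, feeds the result into Proposition \ref{prop:van} (which only needs almost-fullness and a general section with nice zero locus, available by very ampleness of $(\mc{L}^-)^\vee$), and gets the other sign from \cite[Lemma~2.1]{toda2}. That path is both shorter and sidesteps the Gorenstein issue your induction runs into; I would recommend switching to it rather than repairing your inductive step.
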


\begin{proof}
   Using Lemma \ref{lem:berg}, it suffices to show that $\mc{E}^+$ (resp. $(\mc{E}^-)^{\vee}$) is a tilting bundle i.e.,
   \begin{equation}\label{eq:tilt}
       H^i(\Hc_{Y^+}(\mc{E}^+,\mc{E}^+))=0=H^i(\Hc_{Y^-}((\mc{E}^-)^{\vee},(\mc{E}^-)^{\vee}))\, \mbox{ for all }\, i >0.
   \end{equation}
  Since $\mc{L}^+$ has CM-degree $n^+$, Theorem \ref{thm:van-main} implies that $(\mc{L}^+)^{\otimes -j}$ is almost full for $1 \le j \le n^+$. 
  As $\pi^-$ is a small resolution, this implies 
  \[\pi^-_*((\mc{L}^-)^{\otimes -j})=\pi^+_*((\mc{L}^+)^{\otimes -j})\, \mbox{ is maximal Cohen-Macaulay}.\]
  Therefore $(\mc{L}^-)^{\otimes -j}$ is almost full for $1 \le j \le n^+$.
  Since $X$ is Gorenstein, Lemma \ref{lem:dual} implies that $(\mc{L}^-)^{\otimes j}$ is almost full for $1 \le j \le n^+$.
  As $\mc{L}^-$ is the dual of a very ample line bundle, the zero locus of a general section of 
  $(\mc{L}^-)^{\otimes -j}$ is irreducible, non-singular and intersects $E^-$ properly.
  Then, Proposition \ref{prop:van} implies that 
  \[ H^i((\mc{L}^+)^{\otimes -j})=0=H^i((\mc{L}^-)^{\otimes j})\, \mbox{ for all } i>0, 1 \le j \le n^+.\]
  Then, \cite[Lemma $2.1$]{toda2} implies that 
  \[ H^i((\mc{L}^+)^{\otimes j})=0=H^i((\mc{L}^-)^{\otimes -j})\, \mbox{ for all } i>0, 1 \le j \le n^+.\]
 Since $n^+=n^-$, this implies for all $i>0$, 
 \[H^i(\Hc_{Y^+}(\mc{E}^+,\mc{E}^+))=\bigoplus_{j=-n^+}^{n^+} H^i((\mc{L}^+)^{\otimes j})=0= \bigoplus_{j=-n^-}^{n^-} H^i((\mc{L}^-)^{\otimes j})=H^i(\Hc_{Y^-}((\mc{E}^-)^{\vee},(\mc{E}^-)^{\vee})).\]
 This proves the first part of the theorem.

    Since the exceptional loci $E^+$ and $E^-$ are of codimension at least $2$, we have an equivalence of categories of reflexive sheaves:
    \[\mr{Ref}(Y^+) \to \mr{Ref}(Y^-),\, \mbox{ given by } \mc{F} \mapsto i_*(\mc{F}|_U),\]
    where $U:=X\backslash \{x\}$ is the regular locus of $X$ and $i:U \to Y^-$ is the natural inclusion.
    Since $\mc{E}^+$ maps to $\mc{E}^-$ under this equivalence, this implies 
    \[\mr{Hom}_{Y^+}(\mc{E}^+, \mc{E}^+) \cong \mr{Hom}_{Y^-}(\mc{E}^-,\mc{E}^-) \cong \mr{Hom}_{Y^-}((\mc{E}^-)^{\vee}, (\mc{E}^-)^{\vee}),\]
    where the last equality follows from $\mc{E}^-$ is locally-free. Since $\mc{E}^+$ and $(\mc{E}^-)^{\vee}$ are tilting generators,
    this implies
    \[D^b(Y^+) \cong D^b(\mr{End}_{Y^+}(\mc{E}^+)) \cong D^b(\mr{End}_{Y^-}((\mc{E}^-)^{\vee})) \cong D^b(Y^-).\]
    This proves the theorem.
\end{proof}

\subsection{Example: Mukai flops}\label{sec:mukai}
Mukai flops relate cotangent bundle of $\mb{P}(V)$ and its dual. 
Derived equivalence between Mukai flops have been 
proved by Namikawa \cite{nami-der} and Kawamata \cite{kawa-1}, using Fourier-Mukai transform.
Here we give an alternative proof, using Theorem \ref{thm:orlov}.

Let $V = \mb{C}^{N}$, the cotangent bundle $Y^+ = \Omega_{\mb{P}(V)}$ can be written as
\[
    Y^+ := \bigl\{ (A,L)\in \mr{End}(V)\otimes \mb{P}(V)\ |\ A(V)\subset L, A^2 = 0\bigr\}.
\]
Then $Y^+$ is the crepant resolution of the nilpotent orbit closure $\ov{B(1)}$:
\[
\ov{B(1)} := \bigl\{ A\in \mr{End}(V)\ |\ \mr{rank}(A)\leq 1, A^2 = 0\bigr\}.
\]
Let $Y^- := \Omega_{\mb{P}(V^*)}$ be the cotangent bundle of the dual projective space. Then $Y^-$ is birational to $Y^+$ and gives 
another crepant resolution of $\ov{B(1)}$. 
The diagram
\[
    \xymatrix{
        Y^+ \ar[dr]^{\pi^+}  & & Y^- \ar[dl]_{\pi^-}\\
        & \ov{B(1)} &
    }
\]
is called the \emph{Mukai flop}.

Let $p: Y^+ \rightarrow \mb{P}(V)$ be the bundle projection map. We denote by $\co_{Y^+}(a) =p^*\co(a)$ and $\mo_{Y^-}(a)$ is the strict transform 
of $\mo_{Y^+}(a)$ in $Y^-$.

\begin{thm}
   Notations as above. Then, the invertible sheaf  $\mo_{Y^+}(1)$ is of CM-degree $N-1$. In particular, 
     \[\mc{E}^+:= \bigoplus_{i=0}^{N-1} \mo_{Y^+}(i)\, \mbox{ and } \mc{E}^-:=\bigoplus_{i=0}^{N-1} \mo_{Y^-}(-i)\]
   are tilting generators on $Y^+$ and $Y^-$, respectively. Moreover, we have an equivalence of derived categories $D^b(Y^+) \cong D^b(Y^-)$.
\end{thm}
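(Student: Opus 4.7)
The plan is to apply Theorem \ref{thm:orlov} with $\mc{L}^+ := \mo_{Y^+}(1)$ and $\mc{L}^- := \mo_{Y^-}(-1)$. The hypotheses to verify are: (i) relative very ampleness of $\mc{L}^+$ over $\ov{B(1)}$ with anti-ample strict transform $\mc{L}^-$; (ii) equality of exceptional-locus dimensions $n^+ = n^-$; (iii) that $\mc{L}^+$ has CM-degree $n^+$. The exceptional loci $E^\pm$ are the zero sections of the two cotangent bundles, both isomorphic to $\mb{P}^{N-1}$, so $n^+ = n^- = N-1$. For (i), the product map $(\pi^+,p) : Y^+ \to \ov{B(1)} \times \mb{P}(V)$ is a closed immersion, which shows $\mo_{Y^+}(1) = p^*\mo_{\mb{P}(V)}(1)$ is $\pi^+$-very ample, and by the symmetric construction of the Mukai flop its strict transform is the anti-ample $\mo_{Y^-}(-1)$.

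The heart of the argument is (iii). I would choose $N-1$ linearly independent sections $s_1,\ldots,s_{N-1}$ in $V^* = p^*H^0(\mb{P}(V),\mo(1)) \subset H^0(Y^+, \mo_{Y^+}(1))$ in general position, and set $W_j := \bigcap_{i \le j}\ker(s_i) \subset V$, a subspace of dimension $N-j$. Then
\[
\wt{D}(j) \;=\; \bigcap_{i=1}^{j} Z(s_i) \;=\; p^{-1}(\mb{P}(W_j)) \;=\; \Omega_{\mb{P}(V)}|_{\mb{P}(W_j)}
\]
is the total space of a rank-$(N-1)$ vector bundle over $\mb{P}(W_j) \cong \mb{P}^{N-j-1}$, hence smooth, irreducible, and of codimension $j$ in $Y^+$. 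It meets $E^+ = \mb{P}(V)$ in the linear subspace $\mb{P}(W_j)$, which has codimension $j$ inside $E^+$; so the intersection is proper. Parameterizing $\ov{B(1)}$ by $A = v \otimes \phi$ with $\phi(v) = 0$, the condition $\mr{Im}(A) \subset W_j$ becomes $v \in W_j$, so
\[
D(j) := \pi^+(\wt{D}(j)) = \{A = v \otimes \phi \in W_j \otimes V^* : \mr{rank}(A) \le 1,\ \phi(v)=0\}.
\]

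The key step is the Cohen--Macaulayness of $D(j)$. The crucial observation is that $D(j)$ is cut out inside the affine cone $C_j$ over the Segre-embedded $\mb{P}(W_j) \times \mb{P}(V^*) \hookrightarrow \mb{P}(W_j \otimes V^*)$ by the single linear form $A \mapsto \mr{tr}(A) = \phi(v)$. The Segre cone $C_j$ is a classical arithmetically Cohen--Macaulay integral affine variety, and the nonzero linear form $\mr{tr}$ on $C_j$ is therefore a non-zero-divisor on its coordinate ring; depth comparison in the short exact sequence determined by multiplication by $\mr{tr}$ then yields that $D(j)$ is Cohen--Macaulay. This completes the verification that $\mc{L}^+$ has CM-degree $N-1 = n^+ = n^-$, and Theorem \ref{thm:orlov} directly delivers the tilting generators $\mc{E}^\pm$ together with the equivalence $D^b(Y^+) \cong D^b(Y^-)$. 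I expect the main obstacle to be precisely this Cohen--Macaulayness of $D(j)$: the Segre-cone interpretation is the cleanest route, although one could alternatively verify it by exhibiting the smooth incidence divisor $\{\phi(v)=0\} \subset \mb{P}(W_j)\times\mb{P}(V^*)$ and checking the vanishing of intermediate cohomology of the relevant line bundle twists.
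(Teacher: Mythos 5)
Your argument is correct in outline but takes a genuinely different route from the paper's. The paper proceeds \emph{backwards}: it first cites Hara's vanishing $H^i(\mo_{Y^+}(a))=0$ for $i>0$, $a\ge -N+1$ to obtain $H^{N-1}(\mc{E}^+\otimes(\mc{E}^+)^\vee)=0$ and $\mr{Ext}^i(\mo_{Y^+}(a),\mo_{Y^+})=0$; it then uses Example~\ref{example:ab} to see that $\bigoplus_{i=-N+1}^{N-1}\pi^+_*\mo_{Y^+}(i)$ is maximal Cohen--Macaulay, applies the even case (3) of Theorem~\ref{thm:tf} to conclude $\mc{E}^+$ is a tilting bundle, and only then invokes Corollary~\ref{lem:tilt-2} (the converse direction) to extract the fact that $\mo_{Y^+}(1)$ has CM-degree $N-1$. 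You instead verify CM-degree \emph{directly and geometrically}, by choosing linear sections $s_i\in V^*$, computing $\wt{D}(j)=p^{-1}(\mb{P}(W_j))$ explicitly as a vector bundle over $\mb{P}^{N-j-1}$, and identifying $D(j)$ with a trace-hyperplane section of the Segre cone $C_j$ over $\mb{P}(W_j)\times\mb{P}(V^*)$, whose coordinate ring is determinantal and hence Cohen--Macaulay. Your route avoids Hara's cohomology computations entirely and is arguably the more transparent verification of the hypothesis of Theorem~\ref{thm:orlov}, whereas the paper's route leverages the full strength of its earlier machinery (Theorem~\ref{thm:tf} in the borderline even case $\dim E=d/2$, Corollary~\ref{lem:tilt-2}) at the cost of relying on an external vanishing theorem.

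One small point you should make explicit. Definition~\ref{defi:cm-deg} asks the \emph{reduced} image $D(j)=\pi(\wt{D}(j))$ to be Cohen--Macaulay, while your depth argument a priori produces Cohen--Macaulayness of the scheme $\Spec\bigl(\mb{C}[C_j]/(\mr{tr})\bigr)$, i.e.\ the scheme-theoretic hyperplane section. You therefore need to observe that this hyperplane section is reduced, so that it coincides with $D(j)$. This does hold here: $\mr{tr}$ cuts the smooth Segre transversally along the smooth incidence divisor $\{\phi(v)=0\}\subset\mb{P}(W_j)\times\mb{P}(V^*)$, so the section is generically reduced, and since it is Cohen--Macaulay (hence $S_1$, no embedded primes) Serre's criterion gives reducedness. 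With that one sentence added, the proof is complete.
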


\begin{proof}
 Using \cite[Lemma $3.2$]{ha17}, $H^i(\mo_{Y^+}(a))=0$ for all $i>0$ and $a \ge -N+1$.
 This implies 
 \begin{equation}\label{eq:van-11}
    H^{N-1}\left(\mc{E}^+ \otimes \left(\mc{E}^+\right)^{\vee}\right)=0
 \end{equation}
 Moreover, the Grothendieck spectral 
 sequence implies that \[\mr{Ext}^i(\mo_{Y^+}(a),\mo_{Y^+})=0, \, \mbox{ for }\, -N+1 \le a \le N-1\, \mbox{ and }\, i>0.\] 
 Note that, $\ov{B(1)}$ is a Gorenstein singularity. 
 Using Example \ref{example:ab}, we conclude that \[M:=\pi_*^+ \left(\bigoplus_{i=-N+1}^{N-1} \mo_{Y^+}(i)\right) 
  \mbox{ is a maximal Cohen-Macaulay }\, \mo_{\ov{B(1)}}-\mbox{module}.\] 
 Then, $M_0:=\pi^+_*\mc{E}^+$ is a tilting factor of $M$. Since $\pi^+$ is a small resolution, 
 \[((\pi^+)^*M_0)^{\vee \vee} \cong \mc{E}^+.\] 
 Since the dimension of the exceptional locus of $Y^+$ and $Y^-$ is $N-1\, =\, (\dim(\ov{B(1)}))/2$, Theorem \ref{thm:tf} along with \eqref{eq:van-11} implies that $\mc{E}^+$ is a tilting bundle.  
    Then, Corollary \ref{lem:tilt-2} implies that $\mo_{Y^+}(1)$ is of CM-degree $N-1$. This proves the first part of the theorem. Since the strict transform of $\co_{Y^+}(1)$ under Mukai flop is $\co_{Y^-}(-1)$ (see \cite[Lemma 1.3]{nami-der}),
    the remaining statements of the theorem follow from Theorem \ref{thm:orlov}. This proves the theorem.    
\end{proof}






\end{document}